\documentclass{amsart}
\usepackage{amsmath}
\usepackage{amsthm}
\usepackage{amssymb}
\usepackage[curve]{xypic}
\usepackage{dsfont}
\usepackage{latexsym}
\usepackage{varioref}
\originalTeX
\usepackage{cite}
\usepackage{overpic}


\def\IZ{\mathds{Z}}
\def\IN{\mathds{N}}
\def\IR{\mathds{R}}

\def\Inv{\mathrm{Inv}}

\def\Obj{\mathrm{Obj}}

\def\eps{\varepsilon}

\newcommand{\undefined}{\Diamond}

\DeclareMathOperator{\interior}{int}

\DeclareMathOperator{\cl}{cl}

\DeclareMathOperator{\Def}{\mathcal{D}}

\DeclareMathOperator{\Con}{\mathcal{C}}

\DeclareMathOperator{\Hom}{\mathrm{H}}

\DeclareMathOperator{\CSS}{CSS}

\newcommand{\Hull}{\mathcal{H}}

\newcommand{\HTop}{\mathcal{HT}}
\newcommand{\gradMod}{\mathrm{gradMod}}

\newcommand{\rep}[1]{\left[#1\right]}

\newtheorem{theorem}{Theorem}[section]
\newtheorem{lemma}[theorem]{Lemma}

\newtheorem{corollary}[theorem]{Corollary}

\theoremstyle{definition}
\newtheorem{definition}[theorem]{Definition}

\theoremstyle{remark}
\newtheorem{remark}[theorem]{Remark}
\newtheorem{example}[theorem]{Example}

\begin{document}
	\title[Nonautonomous Conley Index and Attractor-Repeller decompositions]{
		Nonautonomous Conley Index Theory\\
		The Homology Index and Attractor-Repeller decompositions}
	
	\author[A. J\"anig]{Axel J\"anig}
	
	\address
	{\textsc{Axel J\"anig}\\
		Institut f\"ur Mathematik\\
		Universit\"at Rostock\\
		18051 Rostock, Germany}
	
	\email{axel.jaenig@uni-rostock.de}
	
	\subjclass[2010] {Primary: 37B30, 37B55; Secondary: 34C99, 35B40, 35B41}
	\keywords{nonautonomous differential equations, attractor-repeller decompositions, Morse-Conley index
		theory, nonautonomous Conley index, homology Conley index}
	
	\begin{abstract}%
		In a previous work, the author established a nonautonomous Conley index
		based on the interplay between a nonautonomous evolution operator and
		its skew-product formulation. This index is refined
		to obtain a Conley index for families of nonautonomous evolution operators.
		Different variants such as a categorial index, a homotopy index and a homology
		index are obtained. Furthermore, attractor-repeller decompositions
		and conecting homomorphisms are introduced for the nonautonomous setting.
	\end{abstract}
	
	\maketitle
	 In \cite{article_naci}, the author defined a nonautonomous Conley index
	 relying on the interplay between an evolution operator\footnote{or process}
	 and a skew-product formulation. While isolation happens in the skew-product
	 formulation, the index pairs and thus the index live in another space and refers
	 directly to the nonautonomous evolution operator. 

	 An important technical detail of defining the index is the class of index pairs under consideration.
	 In \cite{article_naci}, index pairs are always obtained in the skew-product formulation.
	 In this paper, it will be proved that, roughly speaking, the same index can be defined
	 using a broader class of index pairs based on the evolution operator 
	 instead of the skew-product formulation.

	 Firstly, we will formulate and prove an inclusion property for index pairs.
	 A homotopy index, a categorial index and a homology Conley index will
	 be introduced and, using the previously introduced inclusion properties,
	 shown to be well-defined. Most of these concepts have evolved over decades
	 and are only adapted\footnote{Each genuinely nonautonomous definition in this paper also applies to the autonomous setting. Therefore, a comparison is possible. Minor
	 	differences between our definition and other variants (such as \cite{hib}, for instance) might occur.
	 	} to the nonautonomous setting.
	 
	 A powerful feature of Conley index theories is certainly its ability
	 to reflect attractor-repeller decompositions obtained from the skew-product formulation. 
	 Passing to homology, an attractor-repeller
	 decomposition gives rise to a long exact sequence \cite{conn_matrix, connmatrix} and
	 a so-called connecting homomorphism.
	 These sequences contain information on the connections between attractor and repeller.
	 
	 Usually this long exact sequence is obtained from so-called index triples. Using
	 an appropriate adaption of index triples, these algebraic sequences and
	 their connecting homomorphisms are shown to be available for
	 the nonautonomous index, too.
	 
	 Following a Preliminaries section, a notion of related index pairs is introduced
	 in Section 2.
	 Based on these results, a categorial index is defined in Section 3. Section 4
	 is devoted to attractor-repeller decompositions based on the notion of
	 a homology Conley index defined there as well.
	 
	 The reader who is interested in applications is referred to \cite{article_naci}.
	 Continuation properties of Morse-decompositions
	 and a uniformity property of the connecting homomorphism will be discussed
	 in subsequent papers.

	 \begin{section}{Preliminaries}
	 	For the convenience of the reader, we collect important definitions
	 	and terminology from other sources, mostly following
	 	the author's previous paper on the subject \cite{article_naci}.
	 	
	 	  	\begin{subsection}{Quotient spaces}
	 	  		\index{quotient space}
	 	  		\begin{definition}
	 	  			\label{df:140606-1652}
	 	  			Let $X$ be a topological space, and $A,B\subset X$. Denote
	 	  			\begin{equation*}
	 	  			A/B := A/R \cup \{A\cap B\},
	 	  			\end{equation*}
	 	  			where $A/R$ is the set of equivalence classes
	 	  			with respect to the relation $R$ on $A$ which is defined by $x R y$ iff $x=y$ or $x,y\in B$.
	 	  			
	 	  			We consider $A/B$ as a topological space
	 	  			endowed with the quotient topology with respect to the
	 	  			canonical projection $q:\; A\to A/B$, that is, a
	 	  			set $U\subset A/B$ is open if and only if
	 	  			\begin{equation*}
	 	  			q^{-1}(U) = \bigcup_{x\in U} x
	 	  			\end{equation*}
	 	  			is open in $A$.
	 	  		\end{definition}
	 	  		
	 	  		Recall that the quotient topology is the final topology
	 	  		with respect to the projection $q$. 
	 	  		
	 	  		\begin{remark}
	 	  			The above definition is compatible
	 	  			with the definition used in \cite{hib}
	 	  			or \cite{ryb}. The only difference
	 	  			occurs in the case $A\cap B=\emptyset$, where
	 	  			we add $\emptyset$, which is never an equivalence class,
	 	  			instead of an arbitrary point.
	 	  		\end{remark}
	 	  	\end{subsection}
	 	  	
	 	  	\begin{subsection}{Evolution operators and semiflows}
	 	  		Let $X$ be a metric space. Assuming that $\undefined\not\in X$, we
	 	  		introduce a symbol $\undefined$, which means "undefined". The intention is
	 	  		to avoid the distinction if an evolution operator is defined for a given argument
	 	  		or not. Define $\overline A := A\dot\cup\{\undefined\}$ whenever $A$ is a set with $\undefined\not\in A$.
	 	  		Note that $\overline{A}$ is merely a set, the notation does not contain any
	 	  		implicit assumption on the topology. 
	 	  		
	 	  		\index{evolution operator}
	 	  		\index{semiflow}
	 	  		
	 	  		\begin{definition}
	 	  			\label{df:131009-1504}
	 	  			\index{evolution operator}
	 	  			Let $\Delta:=\{(t,t_0)\in\IR^+\times\IR^+:\; t\geq t_0\}$.
	 	  			A mapping $\Phi:\; \Delta\times \overline{X}\to \overline{X}$
	 	  			is called an {\em evolution operator} if
	 	  			\begin{enumerate}
	 	  				\item $\Def(\Phi) := \{(t,t_0,x)\in\Delta\times X:\; \Phi(t,t_0,x)\neq\undefined\}$ is open
	 	  				in $\IR^+\times\IR^+\times X$;
	 	  				\item $\Phi$ is continuous on $\Def(\Phi)$;
	 	  				\item $\Phi(t_0,t_0, x) = x$ for all $(t_0,x)\in\IR^+\times X$;
	 	  				\item $\Phi(t_2 ,t_0,x) = \Phi(t_2,t_1,\Phi(t_1,t_0,x))$ for all $t_0\leq t_1\leq t_2$ in $\IR^+$ and $x\in X$;
	 	  				\item $\Phi(t,t_0,\undefined) = \undefined$ for all $t\geq t_0$ in $\IR^+$.
	 	  			\end{enumerate}
	 	  			
	 	  			A mapping $\pi:\; \IR^+\times \overline{X}\to \overline{X}$ is called {\em semiflow}
	 	  			if $\tilde\Phi(t+t_0,t_0,x) := \pi(t,x)$ defines an evolution operator.
	 	  			To every evolution operator $\Phi$, there is an associated
	 	  			(skew-product) semiflow $\pi$ on an extended phase space $\IR^+\times X$, defined by $(t_0,x)\pi t = (t_0+t, \Phi(t+t_0,t_0,x))$.
	 	  			
	 	  			A function $u:\; I\to X$ defined on a subinterval $I$ of $\IR$ is called a {\em solution of $\Phi$}
	 	  			if $u(t_1) = \Phi(t_1,t_0,u(t_0))$ for all $\left[t_0,t_1\right]\subset I$.
	 	  		\end{definition}
	 	  		
	 	  		\begin{definition}
	 	  			\index{largest invariant subset}
	 	  			\index{largest positively invariant subset}
	 	  			\index{largest negatively invariant subset}
	 	  			\index{$\Inv$}
	 	  			\index{$\Inv^+$}
	 	  			\index{$\Inv^-$}
	 	  			
	 	  			Let $X$ be a metric space, $N\subset X$ and $\pi$ a semiflow on $X$. The set
	 	  			\begin{equation*}
	 	  			\Inv^-_\pi(N) := \{x\in N:\;\text{ there is a solution }u:\;\IR^-\to N\text{ with }u(0) = x\}
	 	  			\end{equation*}
	 	  			is called the {\em largest negatively invariant subset of $N$}.
	 	  			
	 	  			The set
	 	  			\begin{equation*}
	 	  			\Inv^+_\pi(N) := \{x\in N:\;x\pi\IR^+\subset N\}
	 	  			\end{equation*}
	 	  			is called the {\em largest positively invariant subset of $N$}.
	 	  			
	 	  			The set
	 	  			\begin{equation*}
	 	  			\Inv_\pi(N) := \{x\in N:\;\text{ there is a solution }u:\;\IR\to N\text{ with }u(0) = x\}
	 	  			\end{equation*}
	 	  			is called the {\em largest invariant subset of $N$}.
	 	  		\end{definition}

	 	  		\index{skew-product semiflow}
	 	  		\index{cocycle mapping}
	 	  		
	 	  		Let $X$ and $Y$ be metric spaces, 
	 	  		and assume that $y\mapsto y^t$ is
	 	  		a global\footnote{defined for all $t\in\IR^+$} semiflow 
	 	  		on $Y$, to which we will refer as $t$-translation.
	 	  		
	 	  		\begin{example}
	 	  			Let $Z$ be a metric space, and let $Y := C(\IR^+, Z)$
	 	  			be a metric space such that a sequence of functions converges
	 	  			if and only if it converges uniformly on bounded sets.
	 	  			The translation can now be defined canonically by
	 	  			$y^t(s) := y(t+s)$ for $s,t\in\IR^+$.
	 	  		\end{example}
	 	  		
	 	  		A suitable abstraction of many non-autonomous problems is given
	 	  		by the concept of skew-product semiflows introduced below.
	 	  		
	 	  		\begin{definition}
	 	  			We say that $\pi = (.^t,\Phi)$ is a skew-product semiflow on $Y\times X$
	 	  			if $\Phi:\; \IR^+\times \overline{Y\times X}\to \overline{Y\times X}$ is a mapping such that 
	 	  			\begin{equation*}
	 	  			(t,y,x)\pi t := 
	 	  			\begin{cases}
	 	  			(y^t, \Phi(t,y,x)) & \Phi(t,y,x)\neq\undefined\\
	 	  			\undefined & \text{otherwise}
	 	  			\end{cases}
	 	  			\end{equation*}
	 	  			is a semiflow on $Y\times X$.
	 	  		\end{definition}
	 	  		
	 	  		\begin{definition}
	 	  			\index{positive hull}
	 	  			\index{$\Hull^+(y_0)$}
	 	  			\index{$Y_c$}
	 	  			For $y\in Y$ let
	 	  			\begin{equation*}
	 	  			\Hull^+(y) := \cl_Y \{y^t:\;t\in\IR^+\} 
	 	  			\end{equation*}
	 	  			denote the positive hull of $y$. Let $Y_c$ denote the set of 
	 	  			all $y\in Y$ for which $\Hull^+(y)$ is compact.
	 	  		\end{definition}
	 	  		
	 	  		\begin{definition}
	 	  			\label{df:isolating-neighborhood-1}
	 	  			\index{isolating neighborhood in $\Hull^+(y_0)\times X$}
	 	  			Let $y_0\in Y$ and $N\subset \Hull^+(y_0)\times X$ be a closed subset.
	 	  			$N$ is called an {\em isolating neighborhood} (for $K$ in $\Hull^+(y_0)\times X$) if $\Inv N\subset \interior_{\Hull^+(y_0)\times X} N$ (and $K=\Inv N$).
	 	  		\end{definition}
	 	  		
	 	  		The following definition is a consequence of the slightly modified notion
	 	  		of a semiflow (Definition \ref{df:131009-1504}) but not a semantical change
	 	  		compared to \cite{hib}, for instance.
	 	  		
	 	  		\begin{definition}
	 	  			\index{does not explode}
	 	  			We say that $\pi$ explodes in $N\subset Y\times X$
	 	  			if $x\pi\left[0,t\right[\subset N$ and $x\pi t=\undefined$.
	 	  		\end{definition}
	 	  		
	 	  		Following \cite{ryb_homotopy_index}, we formulate the following
	 	  		asymptotic compactness condition.
	 	  		
	 	  		\begin{definition}
	 	  			A set $M\subset Y\times X$ is called {\em strongly admissible} provided
	 	  			the following holds:
	 	  			
	 	  			Whenever $(y_n,x_n)$ is a sequence in $M$ and $(t_n)_n$ is a sequence
	 	  			in $\IR^+$ such that $(y_n,x_n)\pi\left[0,t_n\right]\subset M$, then
	 	  			the sequence $(y_n,x_n)\pi t_n$ has a convergent subsequence.
	 	  		\end{definition}
	 	  		
	 	  		\begin{definition}
	 	  			Let $\pi = (.^t, \Phi)$ be a skew-product semiflow and $y\in Y$. Define
	 	  			\begin{equation*}
	 	  			\Phi_y(t+t_0,t_0,x) := \Phi(t,y^{t_0},x).
	 	  			\end{equation*}
	 	  			It is easily proved that $\Phi_y$
	 	  			is an evolution operator in the sense of Definition \ref{df:131009-1504}.
	 	  		\end{definition}
	 	  	\end{subsection}
	 \end{section}

	 \begin{section}{Related index pairs}
	 	In this section we give a definition of a nonautonomous
	 	Conley index which is slightly different from the index defined in \cite{article_naci}. 
	 	Essentially, the index is now purely based on nonautonomous
	 	index pairs which are subsets of $\IR^+\times X$, where $X$ is an appropriate metric space.
	 	It is often more convenient to compute the index by using the modified
	 	definition of this section.
	 	The main results are Theorem \ref{th:140128-1546} and its corollary.
	 	
	 	We say that two index pairs for which the assumptions and thus
	 	also the conclusions of Theorem \ref{th:140128-1546} hold
	 	are {\em related}. Roughly speaking, related index pairs
	 	define the same index\footnote{This is not necessarily a homotopy index, so the vague language is intended.}. 
	 	
	 	Throughout this section, it is assumed that $X$ and $Y$ are metric spaces,
	 	and $\pi = \pi(.^t,\Phi)$ is a skew-product semiflow on $Y\times X$. By $\chi := \chi_{y_0}$
	 	we denote the canonical semiflow $(t,x)\chi_{y_0}s := (t+s, \Phi_{y_0}(s,0,x))$
	 	on $\IR^+\times X$.
	 	
	 	\begin{definition}
	 		\label{df:basic_index_pair}
	 		A pair $(N_1, N_2)$ is called a {\em (basic) index pair} relative to a
	 		semiflow $\chi$ in $\IR+\times X$ if
	 		\begin{enumerate}
	 			\item[(IP1)] $N_2\subset N_1\subset \IR^+\times X$, $N_1$ and $N_2$ are
		 			closed in $\IR^+\times X$
			 	\item[(IP2)] If $x\in N_1$ and $x\chi t\not\in N_1$ for some $t\in\IR^+$, 
				 	then $x\chi s\in N_2$ for some $s\in\left[0,t\right]$;
				 \item[(IP3)] If $x\in N_2$ and $x\chi t\not\in N_2$ for some $t\in\IR^+$, then
					$x\chi s\in (\IR^+ \times X)\setminus N_1$ for some $s\in\left[0,t\right]$.
	 		\end{enumerate}
	 	\end{definition}
	 	
	 	The definition above establishes the core properties of an index pair
	 	and is taken from \cite{article_naci}. To obtain an index, we need to associate
	 	invariant sets with index pairs.

 	 	\begin{definition}
	 		\label{df:140128-1503}
	 		\index{index pair for $(y_0, K)$}
	 		Let $y_0\in Y$ and $(N_1, N_2)$ be a basic index pair in $\IR^+\times X$ relative to $\chi_{y_0}$. Define
	 		$r:=r_{y_0}:\; \IR^+\times X\to \Hull^+(y_0)\times X$ by $r_{y_0}(t,x) := (y^t_0,x)$.
	 		
	 		Let $K\subset \omega(y_0)\times X$ be an (isolated) invariant
	 		set. We say that $(N_1, N_2)$ is a (strongly admissible) index pair\footnote{Every index pair in the
	 			sense of Definition \ref{df:140128-1503} is assumed to be strongly admissible.} for $(y_0, K)$ if:
	 		\begin{enumerate}
	 			\item[(IP4)] there is a strongly admissible isolating neighborhood $N$ of $K$ in $\Hull^+(y_0)\times X$
	 			such that $N_1\setminus N_2 \subset r^{-1}(N)$;
	 			\item[(IP5)] there is a neighborhood $W$ of $K$ in $\Hull^+(y_0)\times X$
	 			such that $r^{-1}(W)\subset N_1\setminus N_2$.
	 		\end{enumerate}
	 	\end{definition}
	 	
	 	\begin{definition}
	 		\index{invariant pair}
	 		\index{compact invariant pair}
	 		We say that $(y_0, K)$ is an {\em invariant pair} if $y_0\in Y$ and $K\subset \Hull^+(y_0)\times X$.
	 		An invariant pair $(y_0, K)$ is called a {\em compact invariant pair} provided 
	 		that $K$ is compact.
	 	\end{definition}
	 	
	 	Every FM-index pair relative to the skew-product semiflow induces
	 	an index pair. Therefore, the homotopy index defined here and the homotopy index from
	 	\cite{article_naci} agree\footnote{A more detailed explanation can be found right after Theorem \ref{th:140128-1546}.}.
	 	
	 	\begin{lemma}
	 		\label{le:141113-1800}
	 		Let $y_0\in Y$ and let $(N_1, N_2)$ be an FM-index pair
	 		for $K\subset\Hull^+(y_0)\times X$ such that $N_1$ is
	 		strongly admissible. 
	 		
	 		Then
	 		$(M_1, M_2) := (r^{-1}_{y_0}(N_1), r^{-1}_{y_0}(N_2))$ is an index
	 		pair for $(y_0, K)$.
	 	\end{lemma}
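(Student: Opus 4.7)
My plan is to transport the Franzosa--Mischaikow type conditions on $(N_1, N_2)$ through the map $r := r_{y_0}$. The central observation is that $r$ is continuous and intertwines the two semiflows: from the cocycle relation $\Phi_{y_0}(t+t_0,t_0,x) = \Phi(t,y_0^{t_0},x)$ one verifies $r(x\chi_{y_0} s) = r(x)\pi s$ (with equal definedness of both sides), so that closed sets, invariance, and exit-set properties in $\Hull^+(y_0)\times X$ pull back cleanly to $\IR^+ \times X$.

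Given this equivariance, I would verify (IP1)--(IP3) essentially for free. Closedness of $M_1, M_2$ and the inclusion $M_2 \subset M_1$ follow at once from continuity of $r$ together with $N_2 \subset N_1$. For (IP2), take $x \in M_1$ with $x \chi_{y_0} t \notin M_1$; by equivariance, either $r(x)\pi t \notin N_1$, or $\pi$ explodes in $N_1$ along $r(x)$ at some time $\leq t$, and in either case the exit-set property of the FM-index pair produces $s \in [0,t]$ with $r(x)\pi s \in N_2$. Pulling back under $r$ gives $x\chi_{y_0} s \in M_2$. Condition (IP3) follows in the same way from positive $N_1$-invariance of $N_2$.

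For (IP4), I would take $N := \cl(N_1 \setminus N_2)$, or the isolating neighborhood of $K$ associated to $(N_1, N_2)$ via the FM-definition; being contained in $N_1$, it inherits strong admissibility, while the FM-property of $(N_1, N_2)$ guarantees it isolates $K$. The inclusion $M_1 \setminus M_2 = r^{-1}(N_1 \setminus N_2) \subset r^{-1}(N)$ is then immediate. For (IP5), the FM-property gives $K \subset \interior(N_1)\setminus N_2$, so a sufficiently small open neighborhood $W$ of $K$ in $\Hull^+(y_0)\times X$ satisfies $W \subset N_1 \setminus N_2$, whence $r^{-1}(W) \subset M_1 \setminus M_2$.

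The principal technical obstacle I anticipate lies in the $\undefined$ (explosion) case of (IP2) and (IP3): one must check that explosion of $\chi_{y_0}$ along an orbit in $M_1$ corresponds, via $r$, exactly to explosion of $\pi$ along the image orbit in $N_1$, at the same finite time, so that the exit-set property of the FM-index pair applies on the matching time interval $[0,t]$. Once this bookkeeping is settled, the proof is essentially functorial along the equivariant map $r$.
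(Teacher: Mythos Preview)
Your proposal is correct and follows essentially the same approach as the paper: for (IP4) you take $N = \cl(N_1 \setminus N_2)$ and for (IP5) you use that $K$ lies in the interior of $N_1 \setminus N_2$, exactly as in the paper's proof. The only difference is cosmetic---the paper delegates the verification of (IP1)--(IP3) to Lemma~4.3 of \cite{article_naci}, whereas you spell out the equivariance $r(x\chi_{y_0} s) = r(x)\pi s$ (including the explosion bookkeeping) directly.
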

	 	
	 	\begin{proof}
	 		$(M_1, M_2)$ is an index pair by Lemma 4.3 in \cite{article_naci}.
	 		We need to prove that the 
	 		assumptions (IP4) and (IP5) of Definition \ref{df:140128-1503} are satisfied.
	 		\begin{enumerate}
	 			\item[(IP4)] $N:=\cl_{Y\times X} (N_1\setminus N_2)$ is an isolating neighborhood
			  		for $K$, and $M_1\setminus M_2 = r^{-1}(N_1)\setminus r^{-1}(N_2) \subset r^{-1}(N)$.
		 		\item[(IP5)] Let $W:=\interior_{\Hull^+(y_0)\times X} (N_1\setminus N_2)$, which is
	 		a neighborhood of $K$. We have
	 		$r^{-1}(W) \subset r^{-1}(N_1)\setminus r^{-1}(N_2)$.
		 	\end{enumerate}
	 	\end{proof}
	 	
	 	The following lemma is not much more than a restatement of Theorem 3.5 in \cite{article_naci}.
	 	
	 	\begin{lemma}
	 		\label{le:140923-2008}
	 		Suppose that $(N_1, N_2)\subset (M_1, M_2)$ are index pairs for $(y_0,K)$. The
	 		inclusion induced mapping $i:\; (N_1/N_2, N_2) \to (M_1/M_2, M_2)$ is a homotopy
	 		equivalence.
	 	\end{lemma}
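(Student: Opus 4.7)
The plan is to invoke Theorem 3.5 of \cite{article_naci}, as the author flags explicitly. That result, in its abstract form, asserts that an inclusion of nested index pairs relative to a semiflow induces a pointed homotopy equivalence of the associated quotient pairs, provided a suitable admissibility hypothesis is in force; its proof is the standard Conley-theoretic flow-deformation argument along the semiflow $\chi_{y_0}$. Since by hypothesis $(N_1,N_2)\subset (M_1,M_2)$ are both basic index pairs relative to $\chi_{y_0}$, properties (IP1)--(IP3) hold verbatim for both pairs, so the inclusion $i$ slots directly into the setup of that theorem.

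The only substantive verification is the admissibility condition. Applying (IP4) to each pair produces strongly admissible isolating neighborhoods $N^{(N)}$ and $N^{(M)}$ of $K$ in $\Hull^+(y_0)\times X$ with
\begin{equation*}
N_1\setminus N_2\subset r_{y_0}^{-1}(N^{(N)}),\qquad M_1\setminus M_2\subset r_{y_0}^{-1}(N^{(M)}).
\end{equation*}
A common strongly admissible isolating neighborhood $\tilde N\supset N^{(N)}\cup N^{(M)}$ of $K$ then puts both pairs inside $r_{y_0}^{-1}(\tilde N)$. This is exactly the form of admissibility assumed in \cite{article_naci}, so Theorem 3.5 there applies and yields the claimed pointed homotopy equivalence.

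The main obstacle I anticipate is a translation one rather than a genuine mathematical difficulty: Theorem 3.5 of \cite{article_naci} is stated in the skew-product formulation (for FM-index pairs), whereas here $(N_1,N_2)$ and $(M_1,M_2)$ live in $\IR^+\times X$. To bridge this, I would pull the flow-deformation argument back through $r_{y_0}$, using the same mechanism as in the proof of Lemma \ref{le:141113-1800}: strong admissibility of $\tilde N$ supplies the compactness needed to extract convergent subsequences of trajectory pieces, and the deformation constructed in the skew-product setting descends to $r_{y_0}^{-1}(\tilde N)$ without further modification. Once this identification is made explicit, no new ideas beyond (IP1)--(IP5) and the machinery already developed in \cite{article_naci} are needed.
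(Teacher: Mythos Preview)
Your proposal has a genuine gap in how it verifies the hypothesis of Theorem~3.5 in \cite{article_naci}. You invoke only (IP4), producing isolating neighborhoods $N^{(N)}$, $N^{(M)}$ that \emph{contain} the respective differences, and then assert the existence of a common strongly admissible isolating neighborhood $\tilde N\supset N^{(N)}\cup N^{(M)}$. That last step is unjustified: the union of two isolating neighborhoods of $K$ need not be an isolating neighborhood, and there is no general mechanism producing a larger strongly admissible one. More importantly, even granting such a $\tilde N$, this outward bound is not what Theorem~3.5 requires.

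The paper's proof goes in the opposite direction, using (IP5) rather than (IP4) as the key ingredient. From (IP5) applied to each pair one obtains a single neighborhood $W$ of $K$ with $r_{y_0}^{-1}(W)\subset (N_1\setminus N_2)\cap(M_1\setminus M_2)$; its closure is strongly admissible because it sits inside the (IP4)-neighborhood. One then invokes \cite[Lemma~4.3]{article_naci} to conclude that both $(N_1,N_2)$ and $(M_1,M_2)$ are index pairs for $(\Phi_{y_0}, r_{y_0}^{-1}(W))$ in the sense of \cite{article_naci}. That is the precise common datum to which Theorem~3.5 applies --- and note that this theorem is already formulated for evolution-operator index pairs in $\IR^+\times X$, not for FM-index pairs in the skew product, so the ``translation'' obstacle you anticipate does not arise. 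The missing idea in your argument is exactly this use of (IP5) to manufacture a common inner set, together with the bridge through Lemma~4.3.
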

	 	
	 	\begin{proof}
	 		By Definition \ref{df:140128-1503},
	 		there is a neighborhood $W$ of $K$ such that $r^{-1}(W)\subset (N_1\setminus N_2)\cap (M_1\setminus M_2)$. 
	 		It follows from Definition \ref{df:140128-1503} that 
	 		the closure $\overline{W}:=\cl_{Y\times X}W$ is strongly admissible, so 
	 		by \cite[Lemma 4.3]{article_naci}, $(N_1, N_2)$ and $(M_1, M_2)$ are index pairs
	 		for $(\Phi_{y_0}, r^{-1}_{y_0}(W))$.
	 		The claim is now a direct consequence of Theorem 3.5 in \cite{article_naci}.
	 	\end{proof}
	 	
	 	\begin{definition}
	 		Let $(N_1, N_2)$ be an index pair in $\IR^+\times X$ (relative to the semiflow
	 		$\chi$ on $\IR^+\times X$). For $T\in\IR^+$, we set
	 		\begin{equation*}
	 		N^{-T}_2 := N^{-T}_2(N_1) := \{(t,x)\in N_1:\; \exists s\leq T\; (t,x)\chi s\in N_2\}.
	 		\end{equation*}
	 	\end{definition}
	 	
	 	\begin{lemma}
	 		\label{le:140924-1939}
	 		Let $(N_1, N_2)$ be an index pair for $(y_0,K)$. Then so is
	 		$(N_1, N^{-T}_2)$ for every $T\in\IR^+$.
	 	\end{lemma}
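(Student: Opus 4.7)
The strategy is to verify axioms (IP1)--(IP5) of Definitions \ref{df:basic_index_pair} and \ref{df:140128-1503} for the pair $(N_1, N_2^{-T})$, reusing the data $(N, W)$ that makes $(N_1, N_2)$ an index pair for $(y_0, K)$ whenever possible. Since $N_2 \subset N_2^{-T} \subset N_1$, axioms (IP2) and (IP4) are immediate: any $x \in N_1$ that escapes $N_1$ must first enter $N_2 \subset N_2^{-T}$, and $N_1 \setminus N_2^{-T} \subset N_1 \setminus N_2 \subset r^{-1}(N)$.

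For (IP3) I would argue by cases. Given $x \in N_2^{-T}$ with $x\chi t \notin N_2^{-T}$, the case $x\chi t \notin N_1$ is trivial (take $s := t$). Otherwise $x\chi t \in N_1$, which forces $x\chi(t+v) \notin N_2$ for every $v \in [0, T]$. Picking $u \in [0, T]$ with $x\chi u \in N_2$ then yields $u < t$ (else $v := u - t$ would witness $x\chi t \in N_2^{-T}$), while the choice $v := 0$ shows $x\chi t \notin N_2$. Axiom (IP3) for $(N_1, N_2)$ applied to $x\chi u \in N_2$ and $x\chi u \chi(t-u) = x\chi t \notin N_2$ then produces an exit of $N_1$ at some $s \in [u, t]$.

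For (IP5) the original $W$ cannot be reused: it has to be shrunk so that points near $K$ are not pushed into $N_2$ within time $T$. Because $K$ is compact and $\pi$-invariant (so $K \pi [0, T] = K$), continuity of $\pi$ and openness of $\Def(\pi)$ yield a neighborhood $W' \subset W$ of $K$ in $\Hull^+(y_0) \times X$ on which $\pi \cdot [0, T]$ is defined and satisfies $W' \pi [0, T] \subset W$. Using $r((t, x)\chi s) = r(t, x) \pi s$, the initial segment of every trajectory starting in $r^{-1}(W')$ maps under $r$ into $W$, hence into $N_1 \setminus N_2$; in particular no such starting point lies in $N_2^{-T}$, so $r^{-1}(W') \subset N_1 \setminus N_2^{-T}$.

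The main obstacle is (IP1), namely closedness of $N_2^{-T}$ in $\IR^+ \times X$. For a convergent sequence $(t_n, x_n) \to (t, x)$ with each $(t_n, x_n) \in N_2^{-T}$, I would pick $s_n \in [0, T]$ \emph{minimal} with $(t_n, x_n) \chi s_n \in N_2$; minimality together with (IP2) applied to $(N_1, N_2)$ forces $(t_n, x_n) \chi [0, s_n) \subset N_1 \setminus N_2 \subset r^{-1}(N)$. After passing to a subsequence with $s_n \to s_* \in [0, T]$, two cases arise: if the limit trajectory $(t, x)\chi \cdot$ ever leaves $N_1$ on $[0, s_*]$, then (IP2) for $(N_1, N_2)$ immediately exhibits a time $\le s_* \le T$ at which $(t, x) \chi \cdot \in N_2$; otherwise the trajectory remains in $N_1$, and strong admissibility of $N$ together with continuity of $\chi$ on $\Def(\chi)$ both rules out blow-up of $(t, x)\chi \cdot$ on $[0, s_*]$ and delivers, along a further subsequence, $(t_n, x_n) \chi s_n \to (t, x) \chi s_* \in N_2$. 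In either case $(t, x) \in N_2^{-T}$. The delicate point is coupling admissibility, which lives in $\Hull^+(y_0) \times X$, with convergence in $\IR^+ \times X$; this is facilitated by the fact that the time coordinate $t_n + s_n \to t + s_*$ already converges in $\IR^+$, so admissibility need only supply the limit of the $X$-coordinate.
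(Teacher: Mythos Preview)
Your proof is correct and follows essentially the same axiom-by-axiom verification as the paper. The only notable difference is in (IP1): you take minimal hitting times and invoke strong admissibility of $N$ to exclude blow-up of the limit trajectory before appealing to continuity, whereas the paper simply passes to a convergent subsequence of hitting times and asserts $(s,x)\chi t\in N_2$ directly.
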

	 	
	 	\begin{proof}
	 		We need to check the assumptions of Definition \ref{df:basic_index_pair}
	 		and Definition \ref{df:140128-1503}.
	 		\begin{enumerate}
	 			\item[(IP1)]
	 			We need to show that $N^{-T}_2$ is closed. Suppose that $(s_n, x_n)$ is a sequence
	 			in $N^{-T}_2$ with $(s_n, x_n)\to (s,x)$ in $N_1$. For every $n\in\IN$, there is a
	 			$t_n\in\left[0,T\right]$ such that $(s_n,x_n)\chi t_n\in N_2$. We can assume 
	 			without loss of generality that $t_n\to t\leq T$, so $(s,x)\chi t\in N_2$, which is closed.
	 			Thus it holds that $(s,x)\in N^{-T}_2$.
	 			\item[(IP2)]
	 			Let $x\in N^{-T}_2$ but $x\chi t\not\in N_1$ for some $t\in\IR^+$. $(N_1, N_2)$ is an
	 			index pair, so $x\chi s\in N_2\subset N^{-T}_2$ for some $s\in\left[0,t\right]$. 
	 			\item[(IP3)]
	 			Suppose that $x\in N^{-T}_2$ and $x\chi t\not\in N^{-T}_2$ for some $t\in\IR^+$.
	 			Letting $t_0 := \sup\{s\in\IR^+:\; x\chi\left[0,s\right]\cap N_2 = \emptyset\}$, it follows
	 			that $t_0\leq T$ and $x\chi t_0\in N_2$. Furthermore, one has $x\chi\left[0,t_0\right]\subset N^{-T}_2$,
	 			so $t>t_0$. 
	 			
	 			Since $(N_1, N_2)$ is assumed to be an index pair, 
	 			it follows that $x\chi s\in (\IR^+\times X)\setminus N_1$
	 			for some $s\in\left[t_0, t\right]$.
	 		\item[(IP4)]
	 		$(N_1, N_2)$ is an index pair for $(y_0,K)$, so there is an isolating neighborhood
	 		$N$ of $K$ such that $N_1\setminus N^{-T}_2\subset N_1\setminus N_2\subset r^{-1}(N)$.

			\item[(IP5)]
	 		Let $W$ be an open neighborhood of $K$ such that $r^{-1}(W)\subset N_1\setminus N_2$. We 
	 		consider the set
	 		\begin{equation*}
	 		W^T := \{ (y,x)\in W:\; (y,x)\pi \left[0,T\right]\subset W \}.
	 		\end{equation*}
	 		If $(t,x)\in r^{-1}(W^T)\cap N^{-T}_2$, then $(t,x)\chi_{y_0} T\in r^{-1}(W)\cap N_2=\emptyset$,
	 		so 
	 		\begin{equation*}
	 		r^{-1}(W^T)\subset N_1\setminus N^{-T}_2. 
	 		\end{equation*}
	 		We need to show that $W^T$ is a neighborhood
	 		of $K$. Suppose to the contrary that there is\footnote{As a consequence of the admissibility assumption, $K$ is compact.} 
	 		a sequence $(y_n,x_n)\to (y'_0,x_0)\in K$ in
	 		$N\setminus W^T$. For every $n\in\IN$, there is a $t_n\in\left[0,T\right]$
	 		with $(y_n,x_n)\pi t_n\in (\Hull^+(y_0)\times X)\setminus W$.  We can assume w.l.o.g. that $t_n\to t_0$,
	 		so $(y'_0,x_0)\pi t_0\in (\Hull^+(y_0)\times X)\setminus W$, which is a closed set.
	 		However, $(y'_0,x_0)\pi t_0\in K\subset W$, a contradiction.
	 		\end{enumerate}
	 	\end{proof}
	 	
	 	One frequently needs to prove that a pair $(N_1, N_2)$ is not only an index
	 	pair but also that it belongs to a certain pair $(y_0, K)$. For this purpose
	 	and in conjunction
	 	with Lemma \ref{le:140924-1939}, the following -- simple -- "sandwich" lemma is useful.
	 	
	 	\begin{lemma}
	 		\label{le:150921-1746}
	 		Let $y_0\in Y$, and let
	 		$(N_1, N_2)$, $(M_1, M_2)$ and $(N'_1, N'_2)$ be index pairs with $N_1\setminus N_2\subset M_1\setminus M_2\subset N'_1\setminus N'_2$. 
	 		
	 		If $(N_1, N_2)$ and $(N'_1, N'_2)$ are index pairs for $(y_0, K)$, then so is $(M_1, M_2)$.
	 	\end{lemma}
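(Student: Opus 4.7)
The lemma is a direct consequence of the sandwich hypothesis $N_1\setminus N_2\subset M_1\setminus M_2\subset N'_1\setminus N'_2$ together with the two conditions (IP4) and (IP5) from Definition \ref{df:140128-1503}. The plan is to verify these two conditions for $(M_1,M_2)$ by borrowing the isolating neighborhood from the \emph{outer} pair and the small neighborhood of $K$ from the \emph{inner} pair. Since $(M_1,M_2)$ is already assumed to be a basic index pair, axioms (IP1)--(IP3) are free, so nothing needs to be checked there.

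For (IP4), apply Definition \ref{df:140128-1503} to the outer pair $(N'_1,N'_2)$: there exists a strongly admissible isolating neighborhood $N$ of $K$ in $\Hull^+(y_0)\times X$ with $N'_1\setminus N'_2\subset r^{-1}_{y_0}(N)$. From the hypothesis $M_1\setminus M_2\subset N'_1\setminus N'_2$ the same $N$ immediately satisfies $M_1\setminus M_2\subset r^{-1}_{y_0}(N)$, so (IP4) holds for $(M_1,M_2)$.

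For (IP5), apply Definition \ref{df:140128-1503} to the inner pair $(N_1,N_2)$: there is a neighborhood $W$ of $K$ in $\Hull^+(y_0)\times X$ with $r^{-1}_{y_0}(W)\subset N_1\setminus N_2$. From $N_1\setminus N_2\subset M_1\setminus M_2$ the same $W$ gives $r^{-1}_{y_0}(W)\subset M_1\setminus M_2$, which is (IP5) for $(M_1,M_2)$.

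There is no real obstacle here; the only subtlety worth mentioning is that one must remember that the definition of an index pair for $(y_0,K)$ silently includes strong admissibility of the isolating neighborhood in (IP4), which is inherited from $(N'_1,N'_2)$ without any additional argument. Combining the two verifications yields the conclusion.
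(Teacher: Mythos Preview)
Your proof is correct and follows exactly the same route as the paper: verify (IP4) by borrowing the strongly admissible isolating neighborhood $N$ from the outer pair $(N'_1,N'_2)$ and (IP5) by borrowing the neighborhood $W$ from the inner pair $(N_1,N_2)$, while (IP1)--(IP3) come for free from the basic index pair hypothesis on $(M_1,M_2)$.
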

	 	
	 	\begin{proof}
	 		One simply needs to check the assumptions of Definition \ref{df:140128-1503}.
	 		\begin{enumerate}
	 			\item[(IP4)]
	 			$(N'_1, N'_2)$ is an index pair for $(y_0, K)$, so there is a
	 			strongly admissible isolating neighborhood $N$ 
	 			of $K$ in $\Hull^+(y_0)\times X$ such that $M_1\setminus M_2\subset N'_1\setminus N'_2\subset r^{-1}(N)$.
	 			\item[(IP5)]
	 			$(N_1, N_2)$ is an index pair for $(y_0, K)$, so there is a
	 			neighborhood $W$ of $K$ in $\Hull^+(y_0)\times X$ such that 
	 			$r^{-1}(W)\subset N_1\setminus N_2\subset M_1\setminus M_2$.
	 		\end{enumerate}
	 	\end{proof}
	 	
	 	We are now in a position to formulate and prove the main result of this section.
	 	
	 	\begin{theorem}
	 		\label{th:140128-1546}
	 		Let there be given index pairs $(N_1, N_2)$ and $(M_1, M_2)$
	 		for $(y_0, K)$. Further, let $N\subset \Hull^+(y_0)\times X$ be a strongly admissible neighborhood of $K$.
	 		Then there are a $t_0\in\IR^+$ and an index pair $(L_1, L_2)$ 
	 		such that 
	 		\begin{equation*}
	 		(L_1, L_2) \subset (r^{-1}(N)\cap N_1\cap M_1, N^{-t_0}_2(N_1) \cap M^{-t_0}_2(M_1)).
	 		\end{equation*}
	 	\end{theorem}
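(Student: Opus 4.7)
The plan is to construct $(L_1, L_2)$ explicitly by intersecting the two given index pairs over a common admissible isolating neighborhood, and to verify the index pair axioms for $(y_0, K)$ directly.

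First I would combine the isolating data: by (IP4) for $(N_1, N_2)$ and $(M_1, M_2)$ there exist strongly admissible isolating neighborhoods $N^{(1)}, N^{(2)}$ of $K$ in $\Hull^+(y_0)\times X$ with $N_1\setminus N_2\subset r^{-1}(N^{(1)})$ and $M_1\setminus M_2\subset r^{-1}(N^{(2)})$. Then $\hat N := N \cap N^{(1)} \cap N^{(2)}$ is itself a strongly admissible isolating neighborhood of $K$ contained in $N$, and using (IP5) for both given pairs I would fix an open neighborhood $W$ of $K$ with $\cl W \subset \interior \hat N$ and $r^{-1}(\cl W) \subset (N_1 \setminus N_2) \cap (M_1 \setminus M_2)$.

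Next I would choose $t_0$. Since $\hat N, N^{(1)}, N^{(2)}$ are strongly admissible isolating neighborhoods of $K$ and $K \subset W$, a standard Conley-type argument using strong admissibility and the maximality of $K$ as invariant set yields a single $t_0 > 0$ such that for $i \in \{1, 2\}$ and every $(s, x) \in r^{-1}(N^{(i)})$ with $(s, x)\chi[0, t_0] \subset r^{-1}(N^{(i)})$ there is $s' \in [0, t_0]$ with $(s, x)\chi s' \in r^{-1}(W)$. I then set
\begin{equation*}
L_1 := r^{-1}(\hat N) \cap N_1 \cap M_1, \qquad L_2 := L_1 \cap N^{-t_0}_2(N_1) \cap M^{-t_0}_2(M_1).
\end{equation*}
The claimed containment is immediate; (IP1) follows because $N^{-t_0}_2, M^{-t_0}_2$ are closed (as shown inside the proof of Lemma \ref{le:140924-1939}); (IP4) is witnessed by $\hat N$ itself; and (IP5) follows by intersecting $W$ with the neighborhoods of $K$ provided by (IP5) applied to $(N_1, N^{-t_0}_2)$ and $(M_1, M^{-t_0}_2)$, both of which are index pairs for $(y_0, K)$ by Lemma \ref{le:140924-1939}.

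The main obstacle is verifying (IP2); (IP3) is analogous. Given $x \in L_1$ with $x\chi t \notin L_1$, I set $\tau := \inf\{s \in [0, t] : x\chi s \notin L_1\}$, so $x\chi \tau \in L_1$ by closedness and $\tau < t$. A case analysis on which of the three defining conditions of $L_1$ fails for $x\chi s$ with $s$ slightly larger than $\tau$, combined with (IP2) for $(N_1, N_2)$ and $(M_1, M_2)$, closedness of $N_2$ and $M_2$, and the containments of (IP4), places $x\chi \tau$ in $N_2$ or in $M_2$. The delicate point is to show that $x\chi \tau$ belongs simultaneously to both $N^{-t_0}_2$ and $M^{-t_0}_2$, rather than to only one of $N_2, M_2$. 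Here the choice of $t_0$ is decisive: since $x\chi \tau$ is about to leave $L_1$, its forward orbit cannot remain in $r^{-1}(\hat N)$ indefinitely without meeting $r^{-1}(W)$, and the Conley-type bound applied separately to $N^{(1)}$ and $N^{(2)}$ forces the orbit to exit $r^{-1}(N^{(i)})$ within time $t_0$; combining exit from $r^{-1}(N^{(i)})$ with (IP4) yields a point of the orbit in $N_2$ (resp.\ $M_2$) within time $t_0$, giving $x\chi \tau \in N^{-t_0}_2 \cap M^{-t_0}_2$ and completing (IP2).
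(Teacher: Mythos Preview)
Your approach is quite different from the paper's: the paper does not intersect the given pairs but instead builds $(L_1,L_2)$ from scratch using Lyapunov-type functions $g^+,g^-$ on the skew-product (Lemmas~\ref{le:140129-1335} and \ref{le:151216-1714}), and then proves the inclusion $\hat L_2\subset N_2^{-T}\cap M_2^{-T}$ as a separate step (Lemma~\ref{le:151216-1716}). Your direct-intersection idea is appealing, but the verification of (IP2) has a genuine gap.

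The Conley-type bound you state says that an orbit segment of length $t_0$ lying entirely in $r^{-1}(N^{(i)})$ must \emph{meet} $r^{-1}(W)$; it does not force the orbit to \emph{exit} $r^{-1}(N^{(i)})$, which is what you then use. These are opposite conclusions: since $r^{-1}(W)\subset N_1\setminus N_2$, meeting $r^{-1}(W)$ places the orbit deeper inside $N_1\setminus N_2$, not into $N_2$. Concretely, take $x\in L_1$ whose orbit passes near $K$ and then runs out along the unstable set; it leaves $L_1$ at time $\tau$ by exiting $M_1$ (so $x\chi\tau\in M_2\subset M_2^{-t_0}$), yet may continue inside $N_1\setminus N_2$ for time far exceeding $t_0$. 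Then $x\chi\tau\notin N_2^{-t_0}$, and in fact $x\chi s\notin N_2^{-t_0}$ for every $s\in[0,\tau]$, so no point of $x\chi[0,t]$ lies in your $L_2$. No uniform $t_0$ can preclude this, because trajectories close to the stable set of $K$ linger in $N_1\setminus N_2$ for arbitrarily long times before reaching $N_2$. (By contrast, (IP3) for your pair does go through, via the positive invariance of $N_2^{-t_0}$ in $N_1$ established in Lemma~\ref{le:140924-1939}; the two axioms are not really analogous here.) The paper's construction sidesteps this difficulty because its exit set $\hat L_2$ is defined intrinsically by $g^+\leq c_2$, so (IP2) is verified without any reference to $N_2$ or $M_2$, and the inclusion $\hat L_2\subset N_2^{-T}\cap M_2^{-T}$ is a separate compactness argument.
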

	 	\index{homotopy index (nonautonomous, second variant)}
	 	
	 	An important consequence of the theorem above is that the homotopy
	 	index of $(y_0, K)$ can be defined as the pointed homotopy type 
	 	of $(N_1/N_2, N_2)$, where $(N_1, N_2)$ is an index pair for $(y_0, K)$.
	 	It coincides\footnote{Under the assumptions of Theorem \ref{th:140128-1546}, it follows from \cite{ryb} that
	 		there exists an isolating block for $K$ in $\Hull^+(y_0)\times X$. This isolating
	 		block gives rise to an index pair for $(y_0, K)$ as proved in Lemma \ref{le:141113-1800}.}
	 	with Definition 4.1 in \cite{article_naci}, so there is no need
	 	to redefine the homotopy index. We have merely extended the class of
	 	possible or good index pairs.
	 	
	 	\begin{corollary}
	 		\label{co:140123-1802}
	 		Under the assumptions of Theorem \ref{th:140128-1546},
	 		the pointed homotopy types of $(N_1/N_2, N_2)$ and $(M_1/M_2, M_2)$
	 		agree.
	 	\end{corollary}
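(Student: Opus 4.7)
The plan is to bridge $(N_1,N_2)$ and $(M_1,M_2)$ via the intermediate pair $(L_1,L_2)$ supplied by Theorem \ref{th:140128-1546}, then chain four applications of Lemma \ref{le:140923-2008} to obtain the desired homotopy equivalence between the pointed quotients.

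First I would apply Theorem \ref{th:140128-1546}, taking $N$ to be any strongly admissible isolating neighborhood of $K$ in $\Hull^+(y_0)\times X$ (such an $N$ exists because $(N_1,N_2)$ is already an index pair for $(y_0,K)$). This yields a $t_0\in\IR^+$ together with an index pair $(L_1,L_2)$ for $(y_0,K)$ satisfying $L_1\subset N_1\cap M_1$ and $L_2\subset N^{-t_0}_2(N_1)\cap M^{-t_0}_2(M_1)$. By Lemma \ref{le:140924-1939}, the pairs $(N_1,N^{-t_0}_2(N_1))$ and $(M_1,M^{-t_0}_2(M_1))$ are themselves index pairs for $(y_0,K)$; and taking $s=0$ in the definition of $N^{-T}_2$ shows that $N_2\subset N^{-t_0}_2(N_1)$ and $M_2\subset M^{-t_0}_2(M_1)$.

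In particular, I obtain four pair-inclusions, each between two index pairs for the same $(y_0,K)$: namely $(N_1,N_2)\subset(N_1,N^{-t_0}_2(N_1))$, $(L_1,L_2)\subset(N_1,N^{-t_0}_2(N_1))$, $(L_1,L_2)\subset(M_1,M^{-t_0}_2(M_1))$, and $(M_1,M_2)\subset(M_1,M^{-t_0}_2(M_1))$. Applying Lemma \ref{le:140923-2008} to each of these yields a homotopy equivalence of the corresponding pointed quotients, and these splice into the zig-zag
\[
(N_1/N_2,N_2)\iso(N_1/N^{-t_0}_2,N^{-t_0}_2)\iso(L_1/L_2,L_2)\iso(M_1/M^{-t_0}_2,M^{-t_0}_2)\iso(M_1/M_2,M_2),
\]
from which the claim follows.

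The corollary is thus a purely formal consequence of the main theorem: all the genuine work has already been absorbed into Theorem \ref{th:140128-1546}, and the only point requiring any bookkeeping is the verification that each intermediate pair really is an index pair for the same $(y_0,K)$, which Lemma \ref{le:140924-1939} (together with the conclusion of the theorem) supplies. Accordingly, I do not anticipate any real obstacle in executing this plan.
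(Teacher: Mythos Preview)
Your proposal is correct and follows essentially the same approach as the paper: apply Theorem~\ref{th:140128-1546} to obtain $(L_1,L_2)$ and $t_0$, invoke Lemma~\ref{le:140924-1939} to certify that $(N_1,N^{-t_0}_2)$ and $(M_1,M^{-t_0}_2)$ are index pairs for $(y_0,K)$, and then use Lemma~\ref{le:140923-2008} on the four resulting inclusions to build the zig-zag of pointed homotopy equivalences. Your write-up is in fact slightly more explicit than the paper's about the bookkeeping (e.g.\ the observation $N_2\subset N^{-t_0}_2$), but the argument is the same.
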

	 	
	 	\begin{proof}
	 		By Theorem \ref{th:140128-1546}, there are an
	 		index pair and a constant $t_0\in\IR^+$ for which the following inclusions 
	 		hold true.
	 		\begin{align*}
	 		(L_1, L_2) \subset &(N_1, N^{-t_0}_2) \supset (N_1, N_2)\\
	 		(L_1, L_2) \subset &(M_1, M^{-t_0}_2) \supset (M_1, M_2)
	 		\end{align*}
	 		In view of Lemma \ref{le:140923-2008} and Lemma \ref{le:140924-1939}, 
	 		this readily implies that $(N_1/N_2, N_2)$ and $(M_1/M_2, M_2)$ are isomorphic
	 		in the homotopy category of pointed spaces.
	 	\end{proof}
	 	
	 	The rest of this section is devoted to the proof
	 	of Theorem \ref{th:140128-1546}. The proof is similar
	 	to the proof of \cite[Lemma 4.8]{hib}, but instead
	 	of using isolating blocks, we will construct appropriate 
	 	index pairs. In all subsequent lemmas, we will assume
	 	that the hypotheses of Theorem \ref{th:140128-1546} hold.
	 	
	 	Since $N$ is a neighborhood of $K$, there is an open (in $\Hull^+(y_0)\times X$)
	 	set $U$ with $K\subset U\subset N$. Define $g^+,g^-:\; \Hull^+(y_0)\times X\to \IR^+$
	 	by
	 	\begin{align*}
	 	g^+(y,x) &:= \sup\{ t\in\IR^+:\; (y,x)\pi\left[0,t\right]\subset U\}\\
	 	g^-(y,x) &:= \sup\{ d((y,x)\pi t,\Inv^-_\pi(N)):\; t\in\left[0,g^+(y,x)\right]\}.
	 	\end{align*}
	 	
	 	It is easy to see that both functions $g^+$ and $g^-$ are continuous and
	 	monotone decreasing along solutions in $U$ (resp. $N$), that is, 
	 	if $u:\;\left[0,a\right]\to U$ (resp. $u:\; \left[0,a\right]\to N$)
	 	is a solution of $\pi$, then $t\mapsto g^+(u(t))$ (resp. $t\mapsto g^-(u(t))$) is 
	 	continuous and monotone decreasing on $\left[0,a\right]$.
	 	
	 	\begin{lemma}
	 		\label{le:140129-1335}
	 		\begin{enumerate}
	 			\item[(a)] $g^+$ is lower-semicontinuous.
	 			\item[(b)] $g^-$ is lower-semicontinuous.
	 			\item[(c)] $\{g^+\leq c\} := \{(y,x)\in N:\; g^+(y,x)\leq c\}$ is closed.
	 			\item[(d)] $\{g^-\leq c\} := \{(y,x)\in N:\; g^-(y,x)\leq c\}$ is closed.
	 			\item[(e)] For all $c_1, c_2> 0$, the set
	 			$(\{g^- \leq c_1\}\cap \{g^+>c_2\})$ is a neighborhood of $K := \Inv(N)$.
	 		\end{enumerate}
	 	\end{lemma}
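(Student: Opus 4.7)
Parts (a)--(d) are essentially formal, while (e) contains the only nontrivial dynamical argument.

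For (a), fix $(y,x)$ and let $t<g^+(y,x)$. Then $(y,x)\pi[0,t]$ is a compact subset of the open set $U$. By joint continuity of $\pi$ on $\Def(\Phi)$ together with openness of $\Def(\Phi)$, any sequence $(y_n,x_n)\to(y,x)$ eventually satisfies $(y_n,x_n)\pi[0,t]\subset U$, so $g^+(y_n,x_n)\geq t$ for large $n$. Letting $t\uparrow g^+(y,x)$ yields $\liminf g^+(y_n,x_n)\geq g^+(y,x)$.

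For (b), given $(y_n,x_n)\to(y,x)$ and $\eps>0$, pick $t^*\in[0,g^+(y,x)]$ with $d((y,x)\pi t^*,\Inv^-_\pi(N))\geq g^-(y,x)-\eps$. If $g^+(y,x)=0$, then $g^-(y,x)=d((y,x),\Inv^-_\pi(N))$ and the claim follows from continuity of the distance function. Otherwise one may replace $t^*$ by a slightly smaller time so that $t^*<g^+(y,x)$, losing at most another $\eps$ by continuity of $t\mapsto d((y,x)\pi t,\Inv^-_\pi(N))$. Then by (a), $g^+(y_n,x_n)>t^*$ eventually, and joint continuity of $\pi$ yields $d((y_n,x_n)\pi t^*,\Inv^-_\pi(N))\to d((y,x)\pi t^*,\Inv^-_\pi(N))$, so $\liminf g^-(y_n,x_n)\geq g^-(y,x)-2\eps$. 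Parts (c) and (d) are immediate from (a) and (b), since lower-semicontinuity of a function is equivalent to closedness of all its sublevel sets.

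For (e), first observe that every $(y,x)\in K$ satisfies $g^+(y,x)=\infty$ (as $K\subset U$ is forward invariant) and $g^-(y,x)=0$ (as $K\subset\Inv^-_\pi(N)$ and $K$ is forward invariant). Assume for contradiction that $\{g^-\leq c_1\}\cap\{g^+>c_2\}$ fails to be a neighborhood of $K$; then there is a sequence $(y_n,x_n)$ with $d((y_n,x_n),K)\to 0$ such that for every $n$ either $g^+(y_n,x_n)\leq c_2$ or $g^-(y_n,x_n)>c_1$. Since strong admissibility of $N$ forces $K$ to be compact, a subsequence converges to some $(y_*,x_*)\in K$. By (a), $g^+(y_n,x_n)\to\infty$, ruling out the first alternative. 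So eventually $g^-(y_n,x_n)>c_1$, and one may pick $t_n\in[0,g^+(y_n,x_n)]$ with $d((y_n,x_n)\pi t_n,\Inv^-_\pi(N))\geq c_1/2$. If $(t_n)$ is bounded, extract $t_n\to t_0$; then $(y_n,x_n)\pi t_n\to(y_*,x_*)\pi t_0\in K\subset\Inv^-_\pi(N)$, contradicting the lower bound. If $t_n\to\infty$, strong admissibility applied to $(y_n,x_n)\pi[0,t_n]\subset U\subset N$ yields a subsequential limit $(y^*,x^*)\in N$. A diagonal procedure, applying admissibility to $(y_n,x_n)\pi[0,t_n-s]\subset N$ for a countable dense set of times $s\geq 0$ and using continuity of $\pi$ to interpolate, produces a solution $u\colon\IR^-\to N$ with $u(0)=(y^*,x^*)$. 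Hence $(y^*,x^*)\in\Inv^-_\pi(N)$, again contradicting the lower bound.

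The main obstacle is the diagonal construction in (e): one must nest subsequences carefully so that backward translates converge simultaneously at arbitrary times, and verify that the interpolated family forms a bona fide solution that stays in the closed set $N$. Everything else is a careful bookkeeping of semicontinuity.
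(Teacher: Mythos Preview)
Your proof is correct and follows essentially the same approach as the paper: the same semicontinuity arguments for (a)--(d), and the same contradiction scheme in (e) splitting into bounded and unbounded $t_n$. The only difference is cosmetic: in (e) the paper simply asserts that the subsequential limit along $t_n\to\infty$ lies in $\Inv^-_\pi(N)$ as a standard consequence of strong admissibility, whereas you spell out the diagonal construction that underlies this fact.
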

	 	
	 	\begin{proof}
	 		\begin{enumerate}
	 			\item[(a)]
	 			Let $\eps>0$ and $(y,x)\in \Hull^+(y_0)\times X$. Suppose that $(y_n,x_n)\to (y,x)$ in $\Hull^+(y_0)\times X$
	 			and $g^+(y_n,x_n)\leq g^+(y,x) - \eps$ for all $n\in\IN$.
	 			We can assume w.l.o.g. that $g^+(y_n,x_n)\to t_0$.
	 			
	 			First of all, as $N$ is strongly admissible and $(y_n,x_n)\pi s\to (y,x)\pi s$, it follows that
	 			$(y,x)\pi s\in N$ for all $s\in\left[0,t_0\right]$. Secondly, 
	 			one has $(y_n,x_n)\pi g^+(y_n,x_n)\in X\setminus U$, which 
	 			is closed, so $(y,x)\pi t_0\in X\setminus U$. However, $t_0\leq g^+(y,x)-\eps$,
	 			which is a contradiction.
	 			\item[(b)] Let $(y,x)\in \Hull^+(y_0)\times X$ and 
	 			suppose that $(y_n,x_n)\to (y,x)$ but
	 			$g^-(y_n,x_n)\leq g^-(y,x) - \eps$ for some $\eps>0$.
	 			
	 			Let $t\in\left[0,g^+(y,x)\right[$ be arbitrary.
	 			By the lower-semicontinuity of $g^+$, one has $g^+(y_n,x_n)\geq t$
	 			provided that $n$ is sufficiently large. Furthermore,
	 			$d((y,x)\pi t,\Inv^-(N)) \leq d((y,x)\pi t, (y_n,x_n)\pi t) + d((y_n,x_n)\pi t, \Inv^-(N))$,
	 			so
	 			$d((y,x)\pi t,\Inv^-(N) \leq g^-(y_n,x_n) \leq g^-(y,x) - \eps$. The last inequality
	 			holds for arbitrary $t\in\left[0,g^+(y,x)\right[$. We thus have
	 			$g^-(y,x)\leq g^-(y,x)-\eps$, which is a contradiction.
	 			\item[(c), (d)] This follows immediately from the lower-semicontinuity
	 			of the respective function.
	 			\item[(e)] Arguing by contradiction,
	 			we may assume that there are $(y_n,x_n)\to (y,x)\in K$ such that
	 			either $g^+(y_n,x_n)\leq c_2$ or $g^-(y_n,x_n)>c_1$
	 			for all $n\in\IN$.
	 			In the first case, it follows that $g^+(y,x)\leq c_2$ in contradiction
	 			to $(y,x)\in K$. In the second case, we can choose $t_n\in\IR^+$
	 			such that for all $n\in\IN$, $t_n\leq g^+(y_n, x_n)$ and
	 			\begin{equation}
	 			\label{eq:140128-1808}
	 			d((y_n,x_n)\pi t_n, \Inv^-(N))\geq c_1>0.
	 			\end{equation}
	 			Either $(t_n)_n$ has a convergent subsequence or $t_n\to\infty$.
	 			Suppose that $(t_{n(k)})_k$ is a subsequence with $t_{n(k)}\to t_0$
	 			as $k\to\infty$. It follows that $d((y,x)\pi t_0, \Inv^-(N))\geq c_1$,
	 			which is a contradiction to $(y,x)\in K$.
	 			Thus, one has $t_n\to\infty$, and using the admissibility
	 			of $N$, there is a subsequence
	 			$(y_{n(k)},x_{n(k)})\pi t_{n(k)}$ which converges to 
	 			a point $(y',x')\in\Inv^-(N)$, in contradiction to \eqref{eq:140128-1808}.
	 		\end{enumerate}
	 	\end{proof}
	 	
	 	\begin{lemma}
	 		\label{le:151216-1714}
	 		For $c_1>0$ and $c_2>0$, set
	 		\begin{equation*}
	 		\begin{split}
	 		L^{c_1,c_2}_1 &:= \{g^-\leq c_1\} \cap \cl\{g^+\geq c_2\}\\
	 		L^{c_1,c_2}_2 &:= L^{c_1,c_2}_1\cap \{g^+\leq c_2\}
	 		\end{split}
	 		\end{equation*}
	 		and $\hat L^{c_1,c_2}_i := r^{-1}(L^{c_1,c_2}_i)$, $i=1,2$.
	 		
	 		Then for $c_1$ small and $c_2$ large, one has
	 		\begin{enumerate}
	 			\item $L^{c_1,c_2}_1\subset U$, and
	 			\item 
	 			$(L_1, L_2) := (\hat L_1, \hat L_2):= (\hat L^{c_1,c_2}_1, \hat L^{c_1, c_2}_2)$ is an index pair for $(y_0, K)$.
	 		\end{enumerate}
	 	\end{lemma}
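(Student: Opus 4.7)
The plan is to prove the two assertions separately: (1) by a contradiction argument using strong admissibility of $N$, and (2) by verifying the axioms (IP1)--(IP5) of Definitions \ref{df:basic_index_pair} and \ref{df:140128-1503}, relying on the monotonicity and lower-semicontinuity properties of $g^\pm$ established in Lemma \ref{le:140129-1335}.

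For (1), I would suppose toward a contradiction that there exist sequences $c_1^n \to 0^+$ and $c_2^n \to \infty$ together with points $(y_n, x_n) \in L^{c_1^n, c_2^n}_1 \setminus U$. Since $\{g^- \leq c_1^n\} \subset N$, the points $(y_n, x_n)$ lie in $N$, and strong admissibility (applied with trivial times $t_n \equiv 0$) yields a convergent subsequence $(y_n, x_n) \to (y, x) \in N \setminus U$. Using $(y_n, x_n) \in \cl\{g^+ \geq c_2^n\}$, I would choose $(z_n, w_n)$ with $d((y_n, x_n), (z_n, w_n)) \leq 1/n$ and $g^+(z_n, w_n) \geq c_2^n$, so that $(z_n, w_n) \to (y, x)$ and $(z_n, w_n)\pi[0, c_2^n] \subset U \subset N$; strong admissibility then propagates this to $(y, x)\pi[0, \infty) \subset N$, giving $(y, x) \in \Inv^+(N)$. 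Evaluating the definition of $g^-$ at $t = 0$ gives $d((y_n, x_n), \Inv^-(N)) \leq c_1^n \to 0$, and the closedness of $\Inv^-(N)$ (standard diagonal-subsequence argument using strong admissibility for backward orbits) places $(y, x)$ in $\Inv^-(N)$. Thus $(y, x) \in \Inv(N) = K \subset U$, contradicting $(y, x) \notin U$.

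For (2), the static axioms are almost immediate. Lower-semicontinuity of $g^\pm$ makes the sublevel sets closed, so $L_1$ and $L_2 = L_1 \cap \{g^+ \leq c_2\}$ are closed in $\Hull^+(y_0) \times X$, and continuity of $r$ yields closedness of $\hat L_1, \hat L_2$---this is (IP1). The key identity is
\begin{equation*}
L_1 \setminus L_2 = \{g^- \leq c_1\} \cap \{g^+ > c_2\},
\end{equation*}
obtained from $\{g^+ > c_2\} \subset \{g^+ \geq c_2\} \subset \cl\{g^+ \geq c_2\}$. Its right-hand side lies in $N$, which supplies the required strongly admissible isolating neighborhood for (IP4), and by Lemma \ref{le:140129-1335}(e) it is a neighborhood of $K$, yielding (IP5).

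The dynamic axioms (IP2) and (IP3) hinge on the monotonicity of $g^+$ along $U$-orbits and of $g^-$ along $N$-orbits together with lower-semicontinuity of $g^+$. For (IP2), I would take the first exit time $\tau \in [0, t]$ from $L_1$, so that $(y, x)\pi \tau \in L_1$ by closedness; assuming $g^+((y, x)\pi \tau) > c_2$, the LSC of $g^+$ provides a neighborhood on which $g^+ > c_2$, and monotonicity of $g^-$ along the resulting $N$-orbit keeps $(y, x)\pi s$ in $L_1$ for $s$ slightly beyond $\tau$, contradicting the definition of $\tau$; hence $(y, x)\pi \tau \in L_2$. For (IP3), if an orbit starting in $L_2$ remained inside $L_1 \subset U$ (using part (1)) throughout $[0, t]$, then the linear decrease $g^+((y, x)\pi s) = g^+(y, x) - s \leq c_2$ would force $(y, x)\pi t \in L_2$, contrary to hypothesis. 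The main obstacle is the contradiction argument in (1), where forward-orbit information (strong admissibility applied to the approximating $(z_n, w_n)$) must be combined with backward-orbit information (closedness of $\Inv^-(N)$) to locate the limit $(y, x)$ simultaneously in $K$ and outside $U$.
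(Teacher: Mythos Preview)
Your proof is correct and follows the same overall strategy as the paper: part (1) by contradiction, locating a limit point in $K\cap(N\setminus U)$ via strong admissibility, and part (2) by direct verification of (IP1)--(IP5) using the semicontinuity and monotonicity properties of $g^{\pm}$ from Lemma~\ref{le:140129-1335}. The only noteworthy difference is in (IP2): the paper approximates $x\in L_1$ by a sequence $x_n$ with $g^+(x_n)\geq c_2$, argues the exit through $L_2$ for each $x_n$ explicitly, and passes to the limit; you instead work directly with the first exit time $\tau$ and use lower-semicontinuity of $g^+$ together with $L_1\subset U$ (from part (1)) to force $g^+((y,x)\pi\tau)\leq c_2$ --- both arguments are valid and of comparable difficulty.
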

	 	
	 	\begin{proof}
	 		\begin{enumerate}
	 			\item If $(y,x)\in \cl\{g^+\geq c_2\}$, then $(y,x)\pi \left[0,c_2\right]\subset N$.
	 			Hence, if the claim does not hold, there is a point $(y',x')\in K\cap (N\setminus U) = \emptyset$.
	 			\item 
	 			\begin{enumerate}
	 				\item[(IP1)]
	 				It follows from Lemma \ref{le:140129-1335} (c) and (d) that $L^{c_1,c_2}_1$ and $L^{c_1,c_2}_2$
	 				are closed, so $\hat L_1$ and $\hat L_2$ are closed by the continuity of $r$. 
	 				\item[(IP2)]
	 				Let $x\in L^{c_1,c_2}_1$ and $x\pi t\not\in L^{c_1,c_2}_1$ for some $t\geq 0$. The semiflow
	 				does not explode in $N$. Hence, there is a $t'\leq t$ such that $x\pi t'\in (\Hull^+(y_0)\times X)\setminus L^{c_1, c_2}_1$.
	 				Choose a sequence $x_n\to x$ in $L^{c_1,c_2}_1$ with $g^+(x_n)\geq c_2$. We have $x_n\pi t\not\in L^{c_1,c_2}_1$
	 				for all $n$ sufficiently large, so $x_n\pi s_n\in L^{c_1,c_2}_2$ for some $s_n\leq t$ and all $n\in\IN$.
	 				We can assume w.l.o.g. that $s_n\to s_0\leq t$, so $x\pi s_0\in L^{c_1,c_2}_2$.
	 				\item[(IP3)]
	 				Let $x\in L^{c_1,c_2}_2$ and $x\pi\left[0,t\right]\subset L^{c_1,c_2}_1$.
	 				We have $L^{c_1,c_2}_1\subset U$, so $g^+(x\pi s)\leq g^+(x)$ for all $s\in\left[0,t\right]$.
	 				Hence, $x\pi \left[0,t\right]\subset L^{c_1,c_2}_2$.
	 			\end{enumerate}
	 			
	 			Furthermore, one has $N\supset L^{c_1,c_2}_1\setminus L^{c_1,c_2}_2 \supset W$, 
	 			where $W:= \{g^-\leq c_1\}\cap \{g^+>c_2\}$ is a neighborhood of $K$ by Lemma \ref{le:140129-1335} (e).
	 			Thus, $r^{-1}(N)\supset \hat L^{c_1,c_2}_1\setminus \hat L^{c_1,c_2}_2\supset r^{-1}(W)$, which shows
	 			that $(\hat L_1,\hat L_2)$ is an index pair for $(y_0, K)$.
	 		\end{enumerate}
	 	\end{proof}
	 	
	 	Until now, our proof is based loosely
	 	on the respective proof in \cite{ryb} concerning the
	 	existence of isolating blocks. However, our claim
	 	is significantly weaker, so the proof is - hopefully - easier to follow.
	 	
	 	Since both $(N_1, N_2)$ and $(M_1, M_2)$ are index pairs for $(y_0,K)$,
	 	we can assume without loss of generality that
	 	$r^{-1}(N)\subset N_1\cap M_1$. Otherwise, one can simply replace $N$ by
	 	a sufficiently small neighborhood $N'$, and thereby obtain a stronger result.
	 	In order to complete the proof of Theorem \ref{th:140128-1546},
	 	we need 
	 	\begin{lemma}
	 		\label{le:151216-1716}
	 		For every $d>0$, 
	 		one has $\hat L^{c,d}_2\subset N^{-T}_2$ (resp. $\hat L^{c,d}_2\subset M^{-T}_2$) provided that $c$ is sufficiently small
	 		and $T$ is sufficiently large.
	 	\end{lemma}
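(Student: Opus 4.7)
The plan is to argue by contradiction, exploiting the constraint $N_1 \setminus N_2 \subset r^{-1}(N)$ from (IP4) together with a compactness argument in the skew-product hull. The statement for $M$ is symmetric, so I will treat $N$ only and at the end choose $T$ and $c$ uniformly so that the estimate also holds with $M^{-T}_2$ in place of $N^{-T}_2$.

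The first and main step is to show that, for $c$ sufficiently small, every $\pi$-orbit starting in $L^{c, d}_2$ leaves $N$ in uniformly bounded time $T_1 \in \IR^+$. Assume the contrary; then there are sequences $c_n \to 0$, $(y_n, x_n) \in L^{c_n, d}_2$, and $\tau_n \to \infty$ with $(y_n, x_n) \pi \left[0, \tau_n\right] \subset N$. Since $g^-(y_n, x_n) \leq c_n \to 0$, each $(y_n, x_n)$ lies within $c_n$ of $\Inv^-_\pi(N)$; strong admissibility of $N$ makes $\Inv^-_\pi(N)$ sequentially compact, so along a subsequence $(y_n, x_n) \to (y^*, x^*) \in \Inv^-_\pi(N)$. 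Lower semicontinuity of $g^+$ from Lemma \ref{le:140129-1335}(a) combined with $g^+(y_n, x_n) \leq d$ yields $g^+(y^*, x^*) \leq d$. On the other hand, for every fixed $t \geq 0$ we have $(y_n, x_n) \pi t \in N$ eventually, so $(y^*, x^*) \pi t \in N$ by continuity and closedness of $N$. Hence $(y^*, x^*) \in \Inv^+_\pi(N) \cap \Inv^-_\pi(N) = K \subset U$, and invariance of $K$ together with $K \subset U$ forces $g^+(y^*, x^*) = \infty$, which contradicts $g^+(y^*, x^*) \leq d$.

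Given such $T_1$ and the corresponding threshold $c_0$, the remainder is short. Set $T := T_1 + 1$ and pick $c \leq c_0$ small enough for Step 1 to apply simultaneously to $N$ and $M$. For any $(s, z) \in \hat L^{c, d}_2$, suppose $(s, z) \chi t \notin N_2$ for all $t \in \left[0, T\right]$. Then $(s, z) \chi \left[0, T\right] \subset N_1$, for any $t'' \leq T$ with $(s, z) \chi t'' \notin N_1$ would, via (IP2) applied to $(s, z) \in \hat L^{c, d}_1 \subset r^{-1}(N) \subset N_1$, produce some $s' \in \left[0, t''\right]$ with $(s, z) \chi s' \in N_2$, contradicting the supposition. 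Rewriting (IP4) as $N_1 \subset N_2 \cup r^{-1}(N)$ gives $(s, z) \chi \left[0, T\right] \subset r^{-1}(N)$, equivalently $(y_0^s, z) \pi \left[0, T\right] \subset N$, contradicting Step 1. I expect Step 1 to be the main obstacle: the crux is pairing $(y^*, x^*) \in K$ with $g^+(y^*, x^*) \leq d$, for which the lower semicontinuity of $g^+$ and the sequential compactness of $\Inv^-_\pi(N)$ are both essential.
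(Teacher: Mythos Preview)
There is a genuine gap in Step~2. You write ``Rewriting (IP4) as $N_1 \subset N_2 \cup r^{-1}(N)$,'' but (IP4) only asserts the existence of \emph{some} strongly admissible isolating neighborhood $\tilde N$ of $K$ with $N_1\setminus N_2\subset r^{-1}(\tilde N)$; it does not say this holds for the particular $N$ fixed in the hypotheses of Theorem~\ref{th:140128-1546} and used to define $g^{\pm}$ and $L^{c,d}_i$. In fact, immediately before the lemma the paper replaces $N$ by a smaller neighborhood to force $r^{-1}(N)\subset N_1\cap M_1$, which makes the inclusion $N_1\setminus N_2\subset r^{-1}(N)$ you need even less plausible. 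Consequently, from $(s,z)\chi[0,T]\subset N_1\setminus N_2$ you can only conclude $(y_0^s,z)\pi[0,T]\subset \tilde N$, not $\subset N$, and Step~1 (which shows orbits leave $N$ in bounded time) does not apply.

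The paper avoids this by running the contradiction argument in one pass rather than through the intermediate claim of Step~1: it takes $(t_n,x_n)\in\hat L^{c_n,d}_2$ with $(t_n,x_n)\chi[0,T_n]\subset N_1\setminus N_2$ and $T_n\to\infty$, uses $g^-\leq c_n\to 0$ to get $(y_0^{t_n},x_n)\to(y,x)\in\Inv^-(N)$, and uses (IP4) with the correct $\tilde N$ to obtain $(y,x)\in\Inv^+(\tilde N)$, hence $(y,x)\in K$. The contradiction then comes from $g^+(y_0^{t_n},x_n)\leq d$ together with $(y_0^{t_n},x_n)\pi g^+(y_0^{t_n},x_n)\in N\setminus U$, exactly as in your Step~1. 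Your decomposition into two steps is natural, but Step~1 must be reformulated with $\tilde N$ in place of $N$ for the forward orbit (and then one must justify $\Inv^-(N)\cap\Inv^+(\tilde N)\subset K$); alternatively, merge the two steps as the paper does.
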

	 	
	 	\begin{proof}
	 		If the lemma is not true, then there are sequences $((t_n,x_n))_n$, $c_n\to 0$
	 		and $T_n\to\infty$ such that $(t_n,x_n)\in \hat L^{c_n,d}_2$
	 		and $(t_n,x_n)\pi s\in N_1\setminus N_2$ for all $s\leq T_n$
	 		and all $n\in\IN$.
	 		
	 		Taking subsequences and because $c_n\to 0$, we can assume without loss of generality
	 		that $(y^{t_n}_0,x_n)\to (y,x)\in \Inv^-(N)$, which is compact because
	 		$N$ is strongly admissible. Since $(N_1, N_2)$
	 		is an index pair for $K$, there exists an isolating neighborhood $\tilde N$
	 		for $K$ with $N_1\setminus N_2\subset r^{-1}_{y_0}(\tilde N)$.
	 		The choice of the sequences implies that $(y,x)\in \Inv^+(\tilde N)$,
	 		so $(y, x)\in \Inv(\tilde N) = K$.
	 		
	 		However, $(y^{t_n}_0,x_n)\pi g^+(y^{t_n}_0,x_n)\in N\setminus U$ for all $n\in\IN$.
	 		Furthermore, $g^+(y^{t_n}_0, x_n)\leq d$ by the choice of $\hat L^{c,d}_2$.
	 		One may therefore assume w.l.o.g. that $g^+(y^{t_n}_0,x_n)\to t_0$. Consequently, one obtains
	 		$(y,x)\pi t_0\in (N\setminus U)\cap K = \emptyset$, which is an obvious contradiction.
	 	\end{proof}
	 	
	 	By using Lemma \ref{le:151216-1714}, 
	 	one can construct an index pair $(L_1, L_2) := (\hat L^{c,d}_1, \hat L^{c,d}_2)$ for $(y_0, K)$
	 	choosing $c$  small and $d$ large.
	 	In view of Lemma \ref{le:151216-1716}, one can find a possibly even smaller parameter
	 	$c>0$ such that the conclusions of Theorem \ref{th:140128-1546} hold for large $t_0$.
	 	The proof of Theorem \ref{th:140128-1546} is complete. 
	 \end{section}
	 
	 \begin{section}{Categorial Conley index}
	 	A connected simple system is a small category with the following property:
	 	if $A$ and $B$ are objects, then there is exactly one morphism $A\to B$.
	 	
	 	Understanding the Conley index as a connected simple system is
	 	perhaps the most elegant variant of the index. There is no loss of information,
	 	and other invariants such as a homotopy or (co)homology
	 	index can be derived by applying an appropriate functor.
	 	We will show in this section, that the nonautonomous
	 	extension of the Conley index defines a connected simple system as well.
	 	
	 	
	 	Throughout this section,
	 	we will assume the hypotheses\footnote{i.e. the spaces $X$, $Y$, the semiflows $\pi$, $\chi_{y_0}$ and the
	 		mapping $r:=r_{y_0}$}	 		
	 	at the beginning of the previous section.
	 	
	 	\begin{definition}
	 		\label{df:catci}
	 		\index{$\Con(y_0,K)$}
	 		\index{categorial Conley index (nonautonomous)}
	 		Let $y_0\in Y$, and let $K\subset \Hull^+(y_0)\times X$ be an isolated invariant set
	 		admitting a strongly admissible isolating neighborhood. The categorial
	 		(nonautonomous) Conley index $\Con(y_0,K)$ of $(y_0,K)$ is the smallest subcategory
	 		of the homotopy category of pointed spaces with the following properties:
	 		\begin{enumerate}
	 			\item Objects of $\Con(y_0,K)$ are pairs $(N_1/N_2, N_2)$, where
	 			$(N_1, N_2)$ is an index pair for $(y_0, K)$.
	 			\item If $(N_1, N_2)$ and $(M_1, M_2)$ are index pairs for $(y_0,K)$ with $(N_1, N_2)\subset (M_1, M_2)$,
	 			then the inclusion induced morphism $i:\; (N_1/N_2, N_2)\to (M_1/M_2, M_2)$ in the homotopy category
	 			of pointed spaces is a morphism of $\Con(y_0, K)$.
	 		\end{enumerate}
	 		\index{$\rep{N_1, N_2}$}
	 		For brevity, we also write $\rep{N_1,N_2} := (N_1/N_2, N_2)$.
	 	\end{definition}
	 	
	 	\begin{theorem}
	 		\label{th:131121-1208}
	 		$\Con(y_0,K)$ is (well-defined and) a connected simple system.
	 	\end{theorem}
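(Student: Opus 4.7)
The well-definedness of $\Con(y_0, K)$ is immediate: the intersection of any family of subcategories of the homotopy category of pointed spaces satisfying conditions (1) and (2) is again such a subcategory, so the smallest one exists as this intersection.

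The central observation for the simple system property is that by Lemma \ref{le:140923-2008}, every generating inclusion-induced morphism is a homotopy equivalence; consequently every morphism of $\Con(y_0, K)$, being a composition of these generators, is a homotopy equivalence in the ambient category.

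For connectedness, given two objects $\rep{A_1, A_2}$ and $\rep{B_1, B_2}$, the plan is to apply Theorem \ref{th:140128-1546} to produce a $t_0 \in \IR^+$ and an index pair $(L_1, L_2)$ for $(y_0, K)$ with $(L_1, L_2) \subset (A_1, A_2^{-t_0})$ and $(L_1, L_2) \subset (B_1, B_2^{-t_0})$. By Lemma \ref{le:140924-1939}, both enlarged pairs are themselves index pairs for $(y_0, K)$; combined with the trivial inclusions $A_2 \subset A_2^{-t_0}$ and $B_2 \subset B_2^{-t_0}$, this yields the zigzag
\begin{equation*}
\rep{A_1, A_2} \to \rep{A_1, A_2^{-t_0}} \leftarrow \rep{L_1, L_2} \to \rep{B_1, B_2^{-t_0}} \leftarrow \rep{B_1, B_2},
\end{equation*}
in which every arrow is a generating morphism of $\Con(y_0, K)$ and a homotopy equivalence. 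Inverting the leftward arrows in the homotopy category then produces a morphism $\rep{A_1, A_2} \to \rep{B_1, B_2}$.

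For uniqueness, I would show that any two morphisms $f, g : \rep{A} \to \rep{B}$ of $\Con(y_0, K)$ factor through a common refinement produced by a further application of Theorem \ref{th:140128-1546}; a diagram chase using the functoriality of inclusion-induced morphisms (and the fact that between comparable index pairs for $(y_0, K)$ there is essentially only one inclusion) then forces $f = g$ in the homotopy category. The main obstacle is confirming that the inverses of the inclusion-induced morphisms lie in the minimal subcategory $\Con(y_0, K)$: a priori a subcategory need not contain inverses of its morphisms, so one must argue that minimality together with the fact that every generator is already an isomorphism in the ambient homotopy category forces $\Con(y_0, K)$ to be the full groupoid generated by the inclusion morphisms rather than merely the poset-like subcategory formed by chains of inclusions.
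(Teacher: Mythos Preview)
Your overall plan coincides with the paper's: build the zigzag
\[
\rep{N_1,N_2} \to \rep{N_1,N^{-T}_2} \leftarrow \rep{L_1,L_2} \to \rep{M_1,M^{-T}_2} \leftarrow \rep{M_1,M_2}
\]
via Theorem~\ref{th:140128-1546} and Lemma~\ref{le:140924-1939}, then argue uniqueness by a further common refinement. Where the paper goes beyond your sketch is in making uniqueness fully explicit, in three separate diagram chases: (i) independence of $T$ via the obvious commutative ladder for $T_1\geq T_2$; (ii) independence of the auxiliary pair $(L_1,L_2)$ by a second application of Theorem~\ref{th:140128-1546} producing $(L''_1,L''_2)\subset (L_1\cap L'_1,\,L_2^{-T}\cap (L'_2)^{-T})$ and a larger commutative diagram linking the two zigzags; and (iii) compatibility with composition, handled by choosing a single $(L_1,L_2)$ subordinate to three given index pairs simultaneously and cancelling the middle portion. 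Your one-line ``factor through a common refinement and chase'' does not yet distinguish these three issues, and in particular the composition step is missing from your outline.

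Your closing worry about inverses is astute, and it is not resolved in the paper either. The paper simply declares that the arrows in the zigzag ``denote isomorphisms (Lemma~\ref{le:140923-2008}) (respectively the inverse morphism) of $\Con(y_0,K)$'', without arguing that the inverse of an inclusion-induced map lies in the \emph{smallest} subcategory generated by forward inclusions alone. Taken literally, Definition~\ref{df:catci} yields only forward composites, and there is no chain of forward inclusions from $(N_1,N^{-T}_2)$ back to $(L_1,L_2)$. The honest reading of the paper's proof is therefore as an explicit construction: one \emph{defines} the morphisms of $\Con(y_0,K)$ to be the zigzag maps and then verifies that this yields a connected simple system (equivalently, one takes the subgroupoid of $\HTop$ generated by the inclusions rather than the bare subcategory). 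So your instinct is correct; it is a definitional wrinkle rather than a flaw in your argument, and you should not expect to repair it within the proof itself.
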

	 	
	 	The proof below can be sketched as follows:
	 	Given two arbitrary index pairs $(N_1, N_2)$ and $(M_1, M_2)$, one constructs
	 	a morphism $f:\; \rep{N_1, N_2}\to \rep{M_1, M_2}$ in $\Con(y_0, K)$. This morphism
	 	$f$ is a composition of inclusion induced morphisms or their inverse morphisms and therefore
	 	necessarily a morphism of $\Con(y_0,K)$.
	 	These morphisms are then shown to be unique, that is, $f$
	 	depends only on $(N_1, N_2)$ and $(M_1, M_2)$, and invariant with respect
	 	to composition.
	 	In other words, the proof is nothing but an explicit construction.
	 	
	 	\begin{proof}
	 		Let $(N_1, N_2)$ and $(M_1, M_2)$ be arbitrary index pairs for $(y_0,K)$.
	 		By Theorem \ref{th:140128-1546}, there is an index pair $(L_1, L_2)$ for $(y_0,K)$
	 		and a $T\in\IR^+$ such that
	 		\begin{equation*}
	 		(L_1, L_2)\subset (N_1\cap M_1, N^{-T}_2 \cap M^{-T}_2).
	 		\end{equation*}
	 		
	 		Each inclusion of index pairs gives rise to a morphism. We obtain
	 		the following diagram, the arrows of which denote isomorphisms (Lemma \ref{le:140923-2008})
	 		(respectively the inverse morphim) of $\Con(y_0,K)$.
	 		\begin{equation}
	 		\label{eq:131121-1315}
	 		\xymatrix@1{
	 			\rep{N_1,N_2} \ar[r] & \rep{N_1, N^{-T}_2} & \ar[l] \rep{L_1, L_2} \ar[r] & \rep{M_1,M^{-T}_2} & \ar[l] \rep{M_1,M_2}
	 		}
	 		\end{equation}
	 		It follows that there is a morphism in $\rep{N_1,N_2}\to \rep{M_1, M_2}$ in 
	 		$\Con(y_0, K)$, namely the composition of the morphisms
	 		in the row above.
	 		
	 		Next, we will show that the morphism obtained using this procedure is
	 		unique. Firstly, let $T_1\geq T_2$ be positive real numbers. The following
	 		ladder with inclusion induced arrows is commutative.
	 		\begin{equation*}
	 		\xymatrix{
	 			\rep{N_1,N_2} \ar[r] & \rep{N_1, N^{-T_1}_2} & \ar[l] \rep{L_1, L_2} \ar[r] & \rep{M_1,M^{-T_1}_2} & \ar[l] \rep{M_1,M_2}\\
	 			\rep{N_1,N_2} \ar[r] \ar[u] & \rep{N_1, N^{-T_2}_2} \ar[u] & \ar[l] \ar[u] \rep{L_1, L_2} \ar[r] 
	 			& \ar[u] \rep{M_1,M^{-T_2}_2} & \ar[l] \ar[u] \rep{M_1,M_2}
	 		}
	 		\end{equation*}
	 		Hence, the morphism $\rep{N_1,N_2}\to \rep{M_1,M_2}$ defined by
	 		\eqref{eq:131121-1315} is independent of $T$. Secondly, one needs to
	 		consider the index pair $(L_1, L_2)$. Suppose $(L'_1, L'_2)$ is 
	 		another index pair for $(y_0,K)$ with $(L'_1, L'_2)\subset (N_1\cap M_1, N^{-T}_2\cap M^{-T}_2)$.
	 		It follows again from Theorem \ref{th:140128-1546} that there exist
	 		an index pair $(L''_1, L''_2)$ for $(y_0,K)$ and a constant $T>0$
	 		such that $(L''_1, L''_2)\subset (L_1\cap L'_1, L^{-T}_2\cap (L')^{-T}_2)$.
	 		
	 		We obtain a commutative diagram below, where each arrow denotes an inclusion
	 		induced (iso)morphism.
	 		\begin{equation*}
	 		\xymatrix{
	 			&& \rep{L_1,L_2} \ar[ldd] \ar[rdd] \ar[d]\\
	 			&& \rep{L_1,L^{-T}_2} \ar[ld] \ar[rd]\\
	 			\rep{N_1,N_2} \ar[r]
	 			&\rep{N_1,N^{-2T}_2}
	 			& \ar[l]
	 			\rep{L''_1,L''_2} \ar[u] \ar[d]
	 			\ar[r]
	 			&\rep{M_1,M^{-2T}_2}
	 			& \ar[l]
	 			\rep{M_1,M_2} \\
	 			&& \rep{L'_1,(L'_2)^{-T}} \ar[lu] \ar[ru]\\
	 			&& \rep{L'_1,L'_2 \ar[u]} \ar[luu] \ar[ruu].
	 		}
	 		\end{equation*}
	 		The morphisms defined by $(L_1,L_2)$ and $(L'_1, L'_2)$
	 		agree since each arrow in the above diagram denotes an 
	 		isomorphism (Lemma \ref{le:140923-2008}).
	 		
	 		Finally, we will show that the composition of two morphisms
	 		obtained from the above prodecure can be written as in 
	 		\eqref{eq:131121-1315}. Suppose, we are given index pairs
	 		$(N_1, N_2)$, $(M_1, M_2)$ and $(O_1,O_2)$ for $(y_0,K)$. 
	 		By Theorem \ref{th:140128-1546}, there are an index pair
	 		$(L_1, L_2)$ for $(y_0, K)$ and a $T\in\IR^+$ such that
	 		\begin{equation*}
	 		(L_1, L_2) \subset (N_1\cap M_1\cap O_1, N^{-T}_2\cap M^{-T}_2\cap O^{-T}_2).
	 		\end{equation*}
	 		
	 		\newcommand{\ot}{\leftarrow}
	 		For every two objects $A,B$ in $\Con(y_0,K)$,
	 		let $A\to B$ denote the unique morphism defined
	 		by \eqref{eq:131121-1315}. We also write $B\ot A$
	 		for the inverse (morphism) of $A\to B$.
	 		Given morphisms $A\to B$ and
	 		$B\to C$, we write $A\to B\to C$ to denote their composition.
	 		We need to prove that $A\to B\to C = A\to C$. One has
	 		\begin{align*}
	 		&\rep{N_1,N_2}\to \rep{M_1,M_2}\to \rep{O_1,O_2}\\
	 		=\,& \rep{N_1,N_2}\to \rep{N_1,N^{-T}_2}\ot \rep{L_1,L_2}
	 		\to \rep{M_1,M^{-T}_2}\ot \rep{M_1,M_2}\\
	 		&\to \rep{M_1, M^{-T}_2} \ot \rep{L_1,L_2} \to \rep{O_1,O^{-T}_2} \ot \rep{O_1,O_2}\\
	 		=\,& \rep{N_1,N_2}\to \rep{N_1,N^{-T}_2}\ot \rep{L_1,L_2}
	 		\to \rep{M_1,M^{-T}_2}
	 		\ot \rep{L_1,L_2} \to \rep{O_1,O^{-T}_2}\\ & \ot \rep{O_1,O_2}\\
	 		=\,& \rep{N_1, N_2} \to \rep{N_1, N^{-T}_2} \ot \rep{L_1,L_2} \to \rep{O_1,O^{-T}_2} \ot \rep{O_1,O_2}\\
	 		=\,& \rep{N_1,N_2} \to \rep{O_1,O_2}.
	 		\end{align*}
	 	\end{proof}
	 	
	 	\index{connected simple system}
	 	\index{$\CSS(\mathcal{K})$}
	 	We will now introduce $\CSS(\mathcal{K})$, the category of connected simple systems
	 	in a given category $\mathcal{K}$. Objects of $\CSS(\mathcal{K})$ are subcategories of $\mathcal{K}$
	 	which are connected simple systems. Let $\mathcal{A}$ and $\mathcal{B}$ be connected
	 	simple systems in $\mathcal{K}$. A morphism $\mathcal{A}\to \mathcal{B}$ in 
	 	$\CSS(\mathcal{K})$ is a family $(f_{A,B})_{(A,B)\in \Obj(\mathcal{A})\times \Obj(\mathcal{B})}$, 
	 	where $\Obj(.)$ denotes the objects of a given category and each $f_{A,B}$ is a morphism $A\to B$ in $\mathcal{K}$ such that 
	 	\begin{equation*}
	 	\xymatrix{
	 		A \ar[r]^-{f_{A,B}} \ar[d] &B \ar[d]\\
	 		A' \ar[r]^-{f_{A',B'}} & B' \\
	 	}
	 	\end{equation*}
	 	is commutative. The vertical arrows denote the unique (inner) morphisms in
	 	$\mathcal{A}$ respectively $\mathcal{B}$. 
	 	
	 	If $A$ is an object of $\mathcal{A}$, $B$ is an object of $\mathcal{B}$,
	 	and $f:\;A\to B$ is a morphism, then there is a unique morphism $F\in \CSS(\mathcal{K})$
	 	with $F=F(A,B) = f$. We say that $\rep{f} := F$ is induced by $f$.
	 	
	 	\index{inner morphism}
	 	Now, set $\mathcal{K} = \HTop$, the homotopy category of pointed spaces,
	 	and given an isolated invariant set $K\subset \Hull^+(y_0)\times X$
	 	admitting a strongly admissible isolating neighborhood, its index
	 	$\Con(y_0, K)$ is an object of $\CSS(\HTop)$. The morphisms of $\Con(y_0,K)$
	 	are called {\em inner morphisms}.
	 \end{section}
	 
	 \begin{section}{Homology Conley index and attractor-repeller sequences}
	 	In this section, attractor-repeller decompositions of isolated invariant sets 
	 	are studied. The main tool are long exact sequences in homology. 
	 	
	 	\begin{subsection}{Attractor-repeller decompositions and index triples}
	 		Attractor-repeller decompositions with respect to semiflows are not exactly a new concept; in particular
	 		since they are applied to the skew-product formulation of the nonautonomous problem.
	 		The main goal of this section is to understand the implications of having an
	 		attractor-repeller decomposition in a space $\Hull^+(y_0)\times X$ on the index pairs
	 		respectively the index, living in the space $\IR^+\times X$.
	 		
	 		First of all, $\alpha$ and $\omega$-limes sets
	 		can be defined as usual.
	 		\begin{align*}
		 		\alpha(u) &:= \bigcap_{t\in\IR^-} \cl_{\Hull^+(y_0)\times X} u(\left]-\infty,t\right])\\
		 		\omega(u) &:= \bigcap_{t\in\IR^+} \cl_{\Hull^+(y_0)\times X} u(\left[t,\infty\right[)
	 		\end{align*}
	 		
	 		Based on the above definitions, the notion of an attractor-repeller decomposition
	 		can be made precise.
	 			 		
	 		\begin{definition}
	 			\index{attractor-repeller decomposition}
	 			Let $y_0\in Y$ and $K\subset \Hull^+(y_0)\times X$ be an isolated
	 			invariant set. $(A,R)$ is an {\em attractor-repeller decomposition} of 
	 			$K$ if $A,R$ are disjoint isolated invariant subsets of $K$ and for every solution $u:\;\IR\to K$
	 			one of the following alternatives holds true.
	 			\begin{enumerate}
	 				\item $u(\IR)\subset A$
	 				\item $u(\IR)\subset R$
	 				\item $\alpha(u)\subset R$ and $\omega(u)\subset A$
	 			\end{enumerate}
	 			We also say that $(y_0, K, A, R)$ is an attractor-repeller decomposition.
	 		\end{definition}
	 		
	 		\begin{definition}
	 			\label{df:index_triple}
	 			\index{index triple (for $(y_0,K,A,R)$)}
	 			Let $y_0\in Y$ and $K\subset \Hull^+(y_0)\times X$ be an isolated
	 			invariant set admitting a strongly admissible isolating neighborhood $N$.
	 			Suppose that $(A,R)$ is an attractor-repeller decomposition of 
	 			$K$.
	 			
	 			A triple $(N_1, N_2, N_3)$ is called an {\em index triple} for
	 			$(y_0,K,A,R)$ provided that:
	 			\begin{enumerate}
	 				\item $N_3\subset N_2\subset N_1$
	 				\item $(N_1, N_3)$ is an index pair for $(y_0,K)$
	 				\item $(N_2, N_3)$ is an index pair for $(y_0,A)$
	 			\end{enumerate}
	 		\end{definition}
	 		
	 		Suppose we are given an isolated invariant set and an attractor-repeller
	 		decomposition thereof. Does there exist an index triple?
	 		
	 		\begin{lemma}
	 			\label{le:141205-1638}
	 			\index{index triple, existence}
	 			Let $y_0\in Y$ and $K\subset \Hull^+(y_0)\times X$ be an isolated
	 			invariant set admitting a strongly admissible isolating neighborhood $N$.
	 			Suppose that $(A,R)$ is an attractor-repeller decomposition of 
	 			$K$.
	 			
	 			Then there exists an index triple $(N_1, N_2, N_3)$ for $(y_0,K,A,R)$
	 			such that $N_1\subset r^{-1}(N)$.
	 		\end{lemma}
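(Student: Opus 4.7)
The plan is to construct the triple in two stages. First, produce an FM-index triple $(B_1, B_2, B_3)$ for $(y_0, K, A, R)$ sitting inside $N \subset \Hull^+(y_0) \times X$; then pull it back to $\IR^+ \times X$ by setting $N_i := r_{y_0}^{-1}(B_i)$ for $i = 1, 2, 3$. The inclusions $B_3 \subset B_2 \subset B_1 \subset N$ translate immediately into $N_3 \subset N_2 \subset N_1 \subset r^{-1}(N)$, and applying Lemma \ref{le:141113-1800} separately to $(B_1, B_3)$ and to $(B_2, B_3)$ yields an index pair for $(y_0, K)$ and an index pair for $(y_0, A)$, which is exactly what Definition \ref{df:index_triple} requires.

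The main work lies in stage one. Since $A$ and $R$ are disjoint closed invariant subsets of the strongly admissible set $N$, both are compact and each admits a strongly admissible isolating neighborhood inside $N$. Choose such neighborhoods $N_A, N_R \subset \interior_{\Hull^+(y_0) \times X} N$ with $N_A \cap N_R = \emptyset$. Apply the isolating-block-like construction from Lemma \ref{le:151216-1714} to $K$ to obtain an FM-index pair $(B_1, B_3)$ with $B_1 \subset N$. Carve $B_2$ out of $B_1$ as the union of $B_3$ with those points whose forward orbit avoids $N_R$. Equivalently, introduce an auxiliary escape-time function
\begin{equation*}
g_R^+(y, x) := \sup\{ t \in \IR^+ : (y, x)\pi\left[0, t\right] \cap N_R = \emptyset \}
\end{equation*}
and define $B_2 := B_3 \cup (B_1 \cap \{g_R^+ \geq T\})$ for a sufficiently large $T$.

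The hard part is verifying that $(B_2, B_3)$ is an FM-index pair for $A$. This is where the attractor-repeller hypothesis enters: by the alternatives in its definition, every full solution in $K$ either stays in $A$, stays in $R$, or has $\alpha$-limit in $R$ and $\omega$-limit in $A$. Consequently, a point in $B_2 \setminus B_3$ that eventually leaves $B_2$ must do so via $B_3$, and any point of $B_1$ whose forward orbit stays close to $A$ lies in $B_2$ for $T$ large. A compactness argument in the spirit of Lemma \ref{le:140129-1335}(e) and Lemma \ref{le:151216-1716}, together with the strong admissibility of $N$, fixes the choice of $T$ and yields the required index-pair axioms at the attractor $A$, while $B_1 \subset N$ gives isolation against a strongly admissible neighborhood. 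Once the triple $(B_1, B_2, B_3)$ is in hand, the pullback via $r_{y_0}^{-1}$ described above completes the proof.
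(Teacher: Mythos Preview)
Your two-stage strategy---build an FM-index triple $(B_1, B_2, B_3)$ for $(K, A, R)$ inside $N \subset \Hull^+(y_0) \times X$, then pull it back via $r_{y_0}^{-1}$ using Lemma~\ref{le:141113-1800}---is exactly the route the paper takes. The only difference is that the paper handles stage one in a single line by citing \cite{hib} for the existence of such an FM-index triple with $B_1 \subset N$, whereas you sketch an explicit construction.

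Your construction is sound in outline, but one technical point needs care: with $N_R$ closed, the function $g_R^+$ (essentially the first hitting time of $N_R$) is only \emph{lower} semicontinuous, so the superlevel set $\{g_R^+ \geq T\}$---and hence $B_2$ as you have defined it---need not be closed, jeopardizing (IP1). You would want to take a closure (as is done with $\cl\{g^+ \geq c_2\}$ in Lemma~\ref{le:151216-1714}) or define $g_R^+$ relative to an open neighborhood of $R$ to repair this. With that adjustment the sketch goes through, and the payoff is a self-contained argument rather than a citation.
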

	 		
	 		\begin{proof}
	 			It is known that there exists an FM-index triple $(N'_1, N'_2, N'_3)$ (see \cite{hib})
	 			with $N_1\subset N$. By Lemma \ref{le:141113-1800}, $(r^-{1}(N'_1),r^{-1}(N'_3))$ is an 
	 			index pair for $(y_0, K)$ and $(r^{-1}(N'_2), r^{-1}(N'_3))$
	 			is an index pair for $(y_0, A)$.
	 		\end{proof}
	 		
	 		\begin{lemma}
	 			\label{le:141106-1717}
	 			Let $(N_1, N_2, N_3)$ be an index triple for $(y_0, K, A, R)$.

	 			Then, $(N_1, N_2)$ is an index pair for $(y_0, R)$.
	 		\end{lemma}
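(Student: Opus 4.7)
My plan is to verify the five axioms (IP1)--(IP5) of Definitions \ref{df:basic_index_pair} and \ref{df:140128-1503} for the pair $(N_1, N_2)$ with respect to the invariant pair $(y_0, R)$, by bootstrapping from the two index pairs $(N_1, N_3)$ and $(N_2, N_3)$ encoded in the index triple. (IP1) is immediate from Definition \ref{df:index_triple}. For (IP2), I would simply invoke (IP2) of the $K$-index pair $(N_1, N_3)$: if $x\in N_1$ and $x\chi t \notin N_1$, then $x\chi s \in N_3\subset N_2$ for some $s\in[0,t]$. The subtler ``semiflow'' step is (IP3), and the only genuine idea here is to chain two conditions. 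Given $x \in N_2$ with $x\chi t \notin N_2$, first apply (IP2) of the $A$-index pair $(N_2, N_3)$ to obtain $s_1 \in [0,t]$ with $x\chi s_1 \in N_3$; then, since $x\chi t \notin N_2 \supset N_3$, apply (IP3) of the $K$-index pair to the point $x\chi s_1$ to produce $s_2 \in [s_1, t]$ with $x\chi s_2 \notin N_1$.

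\textbf{Constructing an isolating neighborhood for $R$.} Let $N_K, W_K$ (respectively $N_A, W_A$) denote the isolating neighborhood and neighborhood provided by (IP4) and (IP5) for $(N_1, N_3)$ (respectively $(N_2, N_3)$). Since $A$ and $R$ are disjoint compact subsets of $K$, I would pick an open neighborhood $U_A$ of $A$ with $U_A\subset W_A$ and $\cl U_A \cap R = \emptyset$, and set $N_R := \cl(N_K \setminus U_A)$. Strong admissibility of $N_R$ is inherited from that of $N_K$ (closed subsets of a strongly admissible set are strongly admissible). The heart of the proof, and what I expect to be the main obstacle, is to show that $\Inv N_R = R$; it is the only step that genuinely uses the attractor--repeller trichotomy. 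The argument is that a full solution $u:\IR\to N_R\subset N_K$ lies in $\Inv N_K = K$; the omega-limit $\omega(u)$ is nonempty by strong admissibility (applied to the constant sequence $u(0)$ with $t_n\to\infty$) and contained in the closed set $N_R$; because $N_R$ is disjoint from $A$ (as $A\subset\interior N_K\cap U_A$ is open, hence disjoint from $\cl(N_K\setminus U_A)$), both alternatives ``$u(\IR)\subset A$'' and ``$\omega(u)\subset A$'' are ruled out, leaving $u(\IR)\subset R$. The reverse inclusion $R\subset N_R$ together with $R \subset \interior N_R$ follows from $R \subset \interior N_K \cap (\cl U_A)^c$.

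\textbf{Conditions (IP4), (IP5).} These are bookkeeping once $N_R$ is in hand. For (IP4): given $(t,x) \in N_1 \setminus N_2 \subset N_1 \setminus N_3$, the $K$-(IP4) places $r(t,x)$ in $N_K$; since $r^{-1}(U_A)\subset r^{-1}(W_A)\subset N_2\setminus N_3$ by the $A$-(IP5), the assumption $(t,x)\notin N_2$ forces $r(t,x)\notin U_A$, whence $r(t,x)\in N_K\setminus U_A\subset N_R$. For (IP5), I would first observe that $R\cap N_A=\emptyset$ (any orbit in $R\cap N_A$ would lie in $\Inv N_A = A$), so by compactness of $R$ I can choose an open neighborhood $U_R$ of $R$ with $\cl U_R\cap N_A = \emptyset$; then $W_R := W_K\cap U_R$ is an open neighborhood of $R$, $r^{-1}(W_R)\subset r^{-1}(W_K)\subset N_1\setminus N_3$, and any hypothetical point in $r^{-1}(W_R)\cap N_2$ would lie in $N_2\setminus N_3\subset r^{-1}(N_A)$ and simultaneously in $r^{-1}(U_R)$, giving $r(t,x)\in N_A\cap U_R = \emptyset$, a contradiction.
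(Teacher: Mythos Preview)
Your verification of (IP1)--(IP4) is correct and follows the paper's line closely; your construction of $N_R := \cl(N_K\setminus U_A)$ differs in detail from the paper's $M_R := M_K\setminus W_A$, but both work, and your use of the attractor--repeller trichotomy to pin down $\Inv N_R = R$ is fine.

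The gap is in (IP5). Your claim $R\cap N_A = \emptyset$ is not justified by the parenthetical ``any orbit in $R\cap N_A$ would lie in $\Inv N_A = A$'': a point $x\in R\cap N_A$ does carry a full solution $u$ with $u(\IR)\subset R$ (since $R$ is invariant), but nothing forces $u(\IR)\subset N_A$, so you cannot conclude $u(\IR)\subset\Inv N_A$. An isolating neighborhood $N_A$ for $A$ may very well meet $R$; the only constraint on $N_A$ from (IP4) is $r(N_2\setminus N_3)\subset N_A$, so you cannot freely shrink it either without first knowing that $\cl r(N_2\setminus N_3)$ avoids $R$ --- which is precisely the nontrivial point.

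The paper handles this by working not with the given $N_A$ but with $N'_A := \cl_{\Hull^+(y_0)\times X} r(N_2\setminus N_3)$ and setting $W_R := W_K\setminus N'_A$. The inclusion $r^{-1}(W_R)\subset N_1\setminus N_2$ is then immediate. The substantive step is to show $R\cap N'_A = \emptyset$, and for this the paper proves that $K\cap N'_A$ is \emph{positively invariant} in $K$: given $x\in K\cap N'_A$ with $x\pi s\notin N'_A$, one approximates $x$ by $r(t_n,x_n)$ with $(t_n,x_n)\in N_2\setminus N_3$, observes that eventually $(t_n,x_n)\chi_{y_0} s_n\in N_3$ for some $s_n\to s_0\le s$, and derives a contradiction with $r^{-1}(W_K)\cap N_3 = \emptyset$ since $x\pi s_0\in K\subset W_K$. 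Positive invariance of $K\cap N'_A$ then forces $\omega(x)\subset K\cap N'_A\subset M_A$, hence $\omega(x)\subset A$ by the trichotomy, so no point of $R$ can lie in $N'_A$. Your argument needs an analogue of this step.
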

	 		
	 		\begin{proof}
	 			Firstly, we will show that $(N_1, N_2)$ is a basic index pair,
	 			that is, we need to check Definition \ref{df:basic_index_pair}.
	 			\begin{enumerate}
	 				\item[(IP2)] Let $x\in N_1$ and $t\in\IR^+$ such 
	 				that $x\chi_{y_0} t\not\in N_1$. It is known that $(N_1, N_3)$
	 				is an index pair, so $x\chi_{y_0} s\in N_3\subset N_2$ for some $s\in\left[0,t\right]$.
	 				\item[(IP3)] Let $x\in N_2$ and $t\in\IR^+$ such 
	 				that $x\chi_{y_0} t\not\in N_2$. $(N_2, N_3)$ is an index pair,
	 				so $x\chi_{y_0} s\in N_3$ for some $s\in\left[0,t\right]$.
	 				Since $(N_1, N_3)$ is also an index pair, it follows
	 				that $x\chi_{y_0} s'\in X\setminus N_1$ for some $s'\in\left[s,t\right]$.
	 			\end{enumerate}
	 			
	 			Recall the mapping $r:=r_{y_0}$, which can be found in Definition \ref{df:140128-1503}.
	 			Since $(N_1, N_3)$ (resp. $(N_2, N_3)$) is an index pair
	 			for $(y_0, K)$ (resp. $(y_0, A)$), there is a strongly admissible
	 			isolating neighborhood $M_K$ (resp. $M_A$) such that
	 			$N_1\setminus N_2\subset r^{-1}(M_K)$ (resp. $N_2\setminus N_3\subset r^{-1}(M_A)$).
	 			There also exists an open neighborhood $W_K$ (resp. $W_A$) of $K$ (resp. $A$)
	 			with $r^{-1}(W_K)\subset N_1\setminus N_3$ (resp. $r^{-1}(W_A)\subset N_2\setminus N_3$).
	 			
	 			Recall that $A\cap R=\emptyset$ by the definition of an attractor-repeller decomposition,
	 			so there are disjoint open neighborhoods $U_A$ of $A$ and $U_R$ of $R$. We may assume without
	 			loss of generality that $W_A\subset U_A$.
	 			Setting $M_R := M_K\setminus W_A$, one has $\Inv M_R \subset R \subset U_R \subset M_R$, which means that $M_R$ is
	 			an isolating neighborhood for $R$.
	 			
	 			Moreover, one has 
	 			\begin{equation*}
	 			N_1\setminus N_2 = (N_1\setminus N_3)\setminus(N_2\setminus N_3) \subset r^{-1}(M_K)\setminus r^{-1}(W_A) = r^{-1}(M_R).
	 			\end{equation*}
	 			
	 			Define $N'_A := \cl_{\Hull^+(y_0)\times X} r(N_2\setminus N_3)$ and $W_R:= W_K\setminus N'_A$. One has
	 			\begin{equation*}
	 			N_1\setminus N_2 \supset r^{-1}(W_K) \setminus (N_2\setminus N_3) \supset r^{-1}(W_K)\setminus r^{-1}(N'_A) = r^{-1}(W_R).
	 			\end{equation*}
	 			The set $K\cap N'_A \subset M_A$ is positively invariant: Let $x\in K\cap N'_A$
	 			and $x\pi s\in K\setminus N'_A$ for some $s\in\IR^+$.
	 			There is a sequence $(t_n,x_n)$ in $N_2\setminus N_3\subset \IR^+\times X$
	 			such that $r(t_n,x_n)\to x$ as $n\to\infty$. We can assume
	 			that $r(t_n,x_n)\pi s\not\in N'_A$ for all $n\in\IN$,
	 			so w.l.o.g. there are reals $s_n\to s_0$ with $(t_n,x_n)\chi_{y_0} s_n\in N_3$ for all $n\in\IN$.
	 			We have $r(t_n,x_n)\pi s_n\to x\pi s_0\in K$, so $(t_n,x_n)\chi_{y_0} s_n\in r^{-1}(W_K)$
	 			for all but finitely many $n$, which is a contradiction since $r^{-1}(W_K)\cap N_3 = \emptyset$.
	 			
	 			Hence, if $x\in K\cap N'_A$, 
	 			then $\omega(x)\subset A$, implying that $R\cap N'_A=\emptyset$.
	 			Therefore $W_R$, which is obviously open, is a neighborhood of $R$.
	 		\end{proof}
	 		
	 		\begin{lemma}
	 			\label{le:141029-1824}
	 			Let $(N_1, N_2, N_3)$ be an index triple for $(y_0, K, A, R)$. Then, for every $T\in\IR^+$
	 			\begin{equation*}
	 			(N_1, N^{-T}_2, N_3) := (N_1, N^{-T}_2(N_1), N_3)
	 			\end{equation*}
	 			and
	 			\begin{equation*}
	 			(N_1, N^{-T}_2, N^{-T}_3) := (N_1, N^{-T}_2(N_1), N^{-T}_3(N_1))
	 			\end{equation*}
	 			are index triples for $(y_0, K, A, R)$.
	 		\end{lemma}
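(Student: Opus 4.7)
The plan is to verify the three conditions of Definition \ref{df:index_triple} for each of the two claimed triples. For each triple, the nested inclusion follows immediately from the definition of the $(-T)$-preimage (using $N_2\subset N_2^{-T}$ and, for the second triple, $N_3\subset N_2$ implies $N_3^{-T}\subset N_2^{-T}$). The top pair is an index pair for $(y_0,K)$, by hypothesis in the case of $(N_1,N_3)$ and by Lemma \ref{le:140924-1939} in the case of $(N_1,N_3^{-T})$. The substantive task is therefore to establish that the middle pair $(N_2^{-T},N_3)$ (respectively $(N_2^{-T},N_3^{-T})$) is an index pair for $(y_0,A)$.

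For axioms (IP1)--(IP3) of the middle pair I would bootstrap from the already-known index pairs. Closedness of the $(-T)$-preimage is exactly the argument from Lemma \ref{le:140924-1939}. For (IP2), if $x\in N_2^{-T}$ satisfies $x\chi_{y_0} t\not\in N_2^{-T}$, I first apply (IP3) for the index pair $(N_1,N_2^{-T})$ (for $(y_0,K)$, via Lemma \ref{le:140924-1939}) to obtain $s_{\ast}\in\left[0,t\right]$ with $x\chi s_{\ast}\not\in N_1$, and then (IP2) for $(N_1,N_3)$ (respectively $(N_1,N_3^{-T})$) to land in the exit set at some $s\in\left[0,s_{\ast}\right]$. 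Axiom (IP3) is immediate from (IP3) for $(N_1,N_3)$ (respectively $(N_1,N_3^{-T})$), since $N_2^{-T}\subset N_1$ forces the exit of $N_1$ to also be an exit of $N_2^{-T}$. Axiom (IP5) is inherited from the $(N_2,N_3)$ index pair via the inclusion $N_2\setminus N_3\subset N_2^{-T}\setminus N_3$; for the second triple the witnessing neighborhood $W_A$ must additionally be shrunk in the manner of the $W^T$ construction inside the proof of Lemma \ref{le:140924-1939} so that $r^{-1}(W_A)\cap N_3^{-T}=\emptyset$.

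The main obstacle is axiom (IP4): producing a strongly admissible isolating neighborhood $N_A'$ of $A$ in $\Hull^+(y_0)\times X$ containing $r(N_2^{-T}\setminus N_3)$ (respectively $r(N_2^{-T}\setminus N_3^{-T})$). Neither the $M_A$ supplied by $(N_2,N_3)$ nor the $N_K$ supplied by $(N_1,N_3)$ works directly: $M_A$ is too small to absorb the new points in $N_2^{-T}\setminus N_2$, while $N_K$ isolates $K$ rather than $A$. Following the construction in the proof of Lemma \ref{le:141106-1717}, I would set $N_A':=N_K\setminus U_R$, where $U_R$ is a small open neighborhood of the compact repeller $R$ disjoint from a fixed open neighborhood of $A$ (available since $A\cap R=\emptyset$ and, by strong admissibility, both are compact). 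That $N_A'$ isolates $A$ follows exactly as in Lemma \ref{le:141106-1717}. The delicate step is choosing $U_R$ small enough that no point of $r(N_2^{-T}\setminus N_3)$ lies in $U_R$: every such point's preimage $x$ satisfies $x\chi_{y_0} s\in N_2$ for some $s\le T$, and by continuous dependence on initial conditions on the bounded interval $\left[0,T\right]$ together with the disjointness of $R$ from $r(N_2\setminus N_3)\subset M_A$ (which can be arranged by shrinking $M_A$ off $R$), trajectories originating in a sufficiently small $U_R$ cannot reach $r(N_2\setminus N_3)$ within time $T$. The subsidiary case in which $x$'s orbit first enters $N_2$ through $N_3$ rather than $N_2\setminus N_3$ is handled using (IP3) for $(N_1,N_3)$, which forces the orbit to exit $N_1$ shortly thereafter and allows one to reduce to the well-placed case on an earlier portion of the trajectory.
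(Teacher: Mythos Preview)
Your handling of (IP4) for the middle pair contains a real gap, and in any case works much harder than necessary. The claim that in your subsidiary case (the orbit first enters $N_2$ through $N_3$) axiom (IP3) for $(N_1,N_3)$ ``forces the orbit to exit $N_1$ shortly thereafter'' is unjustified: (IP3) only says that \emph{if} a point of $N_3$ subsequently leaves $N_3$ then it must leave $N_1$; it gives no information about whether or when the orbit leaves $N_3$, and there is no meaningful ``earlier portion of the trajectory'' to which one could reduce. Similarly, the step ``shrinking $M_A$ off $R$'' already presupposes that $R\cap\cl r(N_2\setminus N_3)=\emptyset$, which is the substantive conclusion of Lemma~\ref{le:141106-1717} rather than something you can take for granted.

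The paper bypasses the entire continuous-dependence argument by invoking Lemma~\ref{le:141106-1717} as a black box: applied to the original triple it yields that $(N_1,N_2)$ is an index pair for $(y_0,R)$, and then Lemma~\ref{le:140924-1939} immediately gives that $(N_1,N_2^{-T})$ is an index pair for $(y_0,R)$. Property (IP5) for this pair furnishes an open $W_R$ with $R\subset W_R$ and $r^{-1}(W_R)\subset N_1\setminus N_2^{-T}$; setting $N_A:=N_K\setminus W_R$ one then obtains
\[
N_2^{-T}\setminus N_3=(N_1\setminus N_3)\setminus(N_1\setminus N_2^{-T})\subset r^{-1}(N_K)\setminus r^{-1}(W_R)=r^{-1}(N_A)
\]
by pure set algebra, with no case distinction needed. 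The second triple is then handled in one line rather than in parallel: since $N_3\subset N_2$ implies $N_3^{-T}(N_1)=N_3^{-T}(N_2^{-T})$, Lemma~\ref{le:140924-1939} applied to the just-established index pair $(N_2^{-T},N_3)$ for $(y_0,A)$ shows that $(N_2^{-T},N_3^{-T})$ is also an index pair for $(y_0,A)$; no separate $W^T$-shrinking for (IP5) is required.
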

	 		
	 		\begin{proof}
	 			Lemma \ref{le:140924-1939} implies that $(N_1, N^{-T}_2)$ and
	 			$(N_1, N^{-T}_3)$ are index pairs for $(y_0, K)$ for every $T>0$. Furthermore,
	 			assuming that $(N^{-T}_2, N_3)$ is an index pair for $(y_0, A)$, it follows
	 			from Lemma \ref{le:140924-1939}\footnote{$N^{-T}_3(N_1)=N^{-T}_3(N^{-T}_2)$} that $(N^{-T}_2, N^{-T}_3)$ is
	 			an index pair for $(y_0, A)$.
	 			
	 			Hence, we only need to prove
	 			that $(N^{-T}_2, N_3)$ is an index pair for $(y_0, A)$. 
	 			\begin{enumerate}
	 				\item[(IP1)] $(N_1, N^{-T}_2)$ is an index pair, so $N^{-T}_2$ is closed.
	 				\item[(IP2)] Let $x\in N^{-T}_2$ and $x\chi_{y_0} t\not\in N^{-T}_2\supset N_2$.
	 				We have $x\chi_{y_0} s'\in N_2$ for some $s'\leq t$.
	 				Since $(N_2, N_3)$ is an index pair, we must have $x\chi_{y_0} s\in N_3$ for some $s\in\left[s',t\right]$.
	 				\item[(IP3)] Let $x\in N_3$ and $x\chi_{y_0} t\not\in N_3$.
	 				$(N_1, N_3)$
	 				is an index pair, so $x\chi_{y_0} s\in (\IR^+\times X)\setminus N_1\subset (\IR^+\times X)\setminus N^{-T}_2$ for some $s\in\left[0,t\right]$.

					\item[(IP4)]
		 			$(N_1, N^{-T}_2)$ is an index pair for $(y_0, R)$, so there is an open neighborhood $W_R$ 
		 			of $R$ such that $r^{-1}(W_R)\subset N_1\setminus N^{-T}_2$. We may assume that $W_R\cap A=\emptyset$
		 			because $A\cap R=\emptyset$. Let $N_K$ be an isolating
		 			neighborhood for $K$ with $N_1\setminus N_3\subset r^{-1}(N_K)$. Then $N_A := N_K\setminus W_R$
		 			is an isolating neighborhood for $A$ with 
		 			\begin{equation*}
		 			N^{-T}_2\setminus N_3 \subset (N_1\setminus N_3)\setminus (N_1\setminus N^{-T}_2) \subset r^{-1}(N_A).
		 			\end{equation*}
			 		
			 		\item[(IP5)]
		 			Since $(N_2, N_3)$ is an index pair for $(y_0, A)$,
		 			there is a neighborhood $W_A$ of $A$ with $r^{-1}(W_A)\subset N_2\setminus N_3$. 
		 			One has 
		 			\begin{equation*}
		 			r^{-1}(W_A)\subset N_2\setminus N_3 \subset N^{-T}_2(N_1)\setminus N_3.
		 			\end{equation*}
		 		\end{enumerate}
	 		\end{proof}
	 	\end{subsection}
	 	
	 	\begin{subsection}{Long exact sequences}
	 		The long exact sequence associated with an attractor-repeller sequence
	 		is usually defined using the concept of so-called weakly exact sequences
	 		(Definition 2.1 in \cite{connmatrix}).
	 		Instead of weakly exact sequences, we use the long exact sequence 
	 		of a triple as a starting point. The
	 		advantage is that our definition relies only on an axiomatic characterization 
	 		of homology yet not necessarily on an underlying chain complex.
	 		It is therefore only assumed that $\Hom_* = (\Hom_q)_{q\in\IZ}$ is
	 		a homology theory satisfying the Eilenberg--Steenrod axioms. Of course,
			$\Hom_*$ can also simply be read as the singular homology functor.
	 		
	 		\begin{lemma}
	 			\label{le:140925-1601}
	 			Let $(N_1, N_2, N_3)$ be an index triple for $(y_0,K,A,R)$.
	 			Then, the projection $p:\; N_1/N_3 \to N_1/N_2$
	 			induces an isomorphism $\varrho:\;\Hom_*(N_1/N_3, N_2/N_3)\to \Hom_*(N_1/N_2, \{N_2\})$.
	 		\end{lemma}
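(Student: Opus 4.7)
The plan is to identify $p$ as the canonical collapse map on quotients and invoke the classical isomorphism $H_*(X,A)\cong \tilde H_*(X/A)$ valid for good pairs. First I would verify that $p$ is constant on $N_2/N_3$, mapping it to the basepoint $\{N_2\}$ of $N_1/N_2$, and a bijection on the complement. Since $N_3\subset N_2$, iterating the quotient $N_1\to N_1/N_3\to (N_1/N_3)/(N_2/N_3)$ induces the same set-theoretic partition as the direct quotient $N_1\to N_1/N_2$, and a routine universal-property check shows the two quotient topologies coincide. Hence $p$ descends to a basepoint-preserving homeomorphism $(N_1/N_3)/(N_2/N_3)\cong N_1/N_2$.

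With this identification the lemma reduces to showing that for $(X,A) := (N_1/N_3, N_2/N_3)$ the quotient map $(X,A)\to (X/A,\{*\})$ induces an isomorphism in relative homology. This is the classical statement for good pairs and follows from the Eilenberg--Steenrod axioms: a five-lemma comparison of the long exact sequences of $(X,A)$ and $(X/A,\{*\})$, combined with excision on a suitable neighborhood of $A$, yields the isomorphism --- recalling that the map $A\to\{*\}$ trivially induces isomorphisms in reduced homology.

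The principal obstacle is verifying the good-pair hypothesis, namely that $N_2/N_3$ is a closed subspace of $N_1/N_3$ admitting a neighborhood in $N_1/N_3$ that deformation-retracts onto it. Closedness follows from (IP1) together with the quotient-topology description. For the deformation retract I expect to exploit the semiflow: after invoking Lemma \ref{le:140924-1939} to thicken $N_2$ to $N^{-T}_2$ (a passage that preserves the index-triple property by Lemma \ref{le:141029-1824}), the trajectories of $\chi_{y_0}$ drive a suitable open neighborhood of $N_2/N_3$ in $N_1/N_3$ into $N_2/N_3$ within time $T$, producing the required collaring. Alternatively, one can transport the known good-pair property for FM-index pairs in \cite{hib} across the correspondence of Lemma \ref{le:141113-1800}, thereby reducing to the classical autonomous case.
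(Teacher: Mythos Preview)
Your strategy is the paper's own: both pass to the thickening $N^{-T}_2$, use the flow-induced map $\varphi_T$ on $N_1/N_3$ to furnish the relevant homotopies, and apply excision. The paper does not package this as the good-pair theorem but instead writes out the chain $\Hom_*(N_1/N_3, N_2/N_3)\to\Hom_*(N_1/N_3, N^{-T}_2/N_3)\to\Hom_*(N_1/N_2, N^{-T}_2/N_2)\to\Hom_*(N_1/N_2, \{N_2\})$ and checks each arrow separately; the content is the same.

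The substantive gap in your outline is the clause ``a suitable open neighborhood of $N_2/N_3$''. Neither Lemma~\ref{le:140924-1939} nor Lemma~\ref{le:141029-1824} asserts that $N^{-T}_2$ is a \emph{neighborhood} of $N_2$ in $N_1$; they only say the thickened pair remains an index pair (resp.\ index triple). Without the neighborhood property excision is unavailable and the good-pair argument collapses. The paper devotes two separate lemmas (Lemma~\ref{le:140925-1628} and Lemma~\ref{le:140925-1640}) to precisely this point: it shows that the exit-time function $f(t,x)=\sup\{s:(t,x)\chi_{y_0}[0,s]\subset\cl(N_1\setminus N_2)\}$ is upper semicontinuous and \emph{bounded on $N_2$} --- the boundedness relies essentially on the strong admissibility in (IP4), via a limiting argument producing a full solution in $N\setminus W$ --- and then concludes that the open set $\{f<T\}$ lies in $N^{-T}_2$ and contains $N_2$ once $T$ is large enough. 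This is where the admissibility hypothesis actually enters, and your sketch does not account for it. Your alternative (b) is not viable either: Lemma~\ref{le:141113-1800} goes only one direction, from FM-index pairs to index pairs in the sense of Definition~\ref{df:140128-1503}, whereas the present lemma concerns an arbitrary index triple which need not arise from the skew-product side.
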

	 		
	 		The proof will be conducted in three steps, the first two being formulated as separate lemmas.
	 		
	 		\begin{lemma}
	 			\label{le:140925-1628}
	 			Let $(N_1, N_2)$ be an index pair for $(y_0, K)$ and define
	 			$f:\; N_1\to \IR^+$ by 
	 			\begin{equation*}
	 			f(t,x) := \sup\{ t_0\in\IR^+:\; (t,x)\chi_{y_0} s\in \cl (N_1\setminus N_2) \text{ for all }s\in\left[0,t_0\right]\}.
	 			\end{equation*}
	 			Then,
	 			\begin{enumerate}
	 				\item[(a)] $f$ is upper semicontinuous and
	 				\item[(b)] bounded on $N_2$.
	 			\end{enumerate}
	 		\end{lemma}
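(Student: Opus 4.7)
The plan is to exploit that the trajectories appearing in the definition of $f$ stay in the closed set $C := \cl(N_1 \setminus N_2)$, which by (IP4) is contained in $r^{-1}(N)$ for some strongly admissible isolating neighborhood $N$ of $K$ in $\Hull^+(y_0) \times X$. Everything reduces to this containment together with (IP3) and (IP5).

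For (a), I argue by contradiction. Suppose $(t_n, x_n) \to (t, x)$ in $N_1$ and $\limsup_n f(t_n, x_n) > f(t, x)$; passing to a subsequence, I may assume $f(t_n, x_n) \geq c$ for some $c > f(t, x)$. By the definition of $f$, continuity of $\chi_{y_0}$ and closedness of $C$, this forces $(t_n, x_n)\chi_{y_0} s \in C$ for every $n$ and every $s \in \left[0, c\right]$. Since $r(t_n, x_n)\pi\left[0, c\right] \subset N$ and $N$ is strongly admissible, $\pi$ does not explode in $N$, so $(t, x)\chi_{y_0} s$ is defined on $\left[0, c\right]$, and continuity yields $(t, x)\chi_{y_0} s = \lim_n (t_n, x_n)\chi_{y_0} s \in C$ for each such $s$. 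Hence $f(t, x) \geq c$, contradicting the choice of $c$.

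For (b), suppose by contradiction that $(t_n, x_n) \in N_2$ with $T_n := f(t_n, x_n) \to \infty$. Eventually $T_n > 0$ and $(t_n, x_n) \in C$, so $(t_n, x_n)\chi_{y_0}\left[0, T_n\right] \subset C \subset N_1$. I first use (IP3) to upgrade this to $(t_n, x_n)\chi_{y_0}\left[0, T_n\right] \subset N_2$: if the trajectory left $N_2$ somewhere in $\left[0, T_n\right]$, then (IP3) would force it to exit $N_1$ no later, contradicting the inclusion in $C \subset N_1$.

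Now set $s_n := T_n/2$ and $p_n := (t_n, x_n)\chi_{y_0} s_n \in N_2$. The image $r(p_n) = r(t_n, x_n)\pi s_n$ lies in $N$, has a forward $\pi$-orbit of length $T_n/2$ in $N$, and admits $\pi$-preimages $r(t_n, x_n)\pi \tau$ with $\tau \in \left[0, s_n\right]$ in $N$ forming a backward trace of length $T_n/2$. A diagonal extraction based on strong admissibility of $N$ produces a subsequence of $r(p_n)$ converging to a point $(y^*, x^*)$ which admits a full $\pi$-solution in $N$, so $(y^*, x^*) \in \Inv N = K$. By (IP5), pick a neighborhood $W$ of $K$ with $r^{-1}(W) \subset N_1 \setminus N_2$; then $r(p_n) \in W$ for large $n$, whence $p_n \in N_1 \setminus N_2$, contradicting $p_n \in N_2$. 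The only real obstacle is the diagonal admissibility argument yielding the two-sided solution through $(y^*, x^*)$, but this is entirely analogous to the compactness step performed in Lemma \ref{le:140129-1335}(e).
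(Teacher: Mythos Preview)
Your proof is correct and follows essentially the same approach as the paper. In part (a) you argue via closedness of $C$ (limit of trajectories in $C$ stays in $C$), whereas the paper argues via openness of the complement (the limit trajectory exits $C$, hence so do the approximants); these are dual phrasings of the same continuity argument. In part (b) the paper streamlines slightly by observing at once that the orbit segments lie in $N_2\cap\cl(N_1\setminus N_2)\subset r^{-1}(N\setminus W)$ and extracting a full solution in $N\setminus W$ directly, rather than first producing a limit in $K$ and then invoking $W$; but your midpoint argument with the (IP3)--(IP5) combination is the same mechanism.
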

	 		
	 		\begin{proof}
	 			\begin{enumerate}
	 				\item[(a)] Suppose that $f$ is not upper semicontinuous. Then there is
	 				a sequence $(t_n,x_n)\to (t_0,x_0)$ in $N_1$ such that $f(t_n,x_n)>f(t_0,x_0)+\eps$
	 				for some $\eps>0$ and all $n\in\IN$.
	 				By the definition of $f$, there is an $s\in\left[0,\eps\right[$ with
	 				$(t_0, x_0)\chi_{y_0} (f(t_0,x_0) + s)\in (\IR^+\times X)\setminus (\cl (N_1\setminus N_2))$.
	 				It follows that $(t_n,x_n)\chi_{y_0} (f(t_0,x_0) + s) \in (\IR^+\times X)\setminus (\cl (N_1\setminus N_2))$
	 				for all $n$ sufficiently large. Hence, $f(t_n, x_n) < f(t_0,x_0) + \eps$ for those $n$, 
	 				which is a contradiction.
	 				\item[(b)] $(N_1, N_2)$ is an index pair for $(y_0,K)$, so there is a strongly
	 				admissible isolating neighborhood $N\subset\Hull^+(y_0)\times X$ for $K$
	 				such that $N_1\setminus N_2 \subset r^{-1}(N)$. $N$ is closed, so
	 				$\cl (N_1\setminus N_2)\subset r^{-1}(N)$. Furthermore, there exists an open neighborhood $W$ of $K$
	 				with $r^{-1}(W)\subset N_1\setminus N_2$. Now, suppose that $f$ is unbounded
	 				on $N_2$. Then there is a sequence $(t_n,x_n)$ in $N_2$ with $f(t_n, x_n)\to\infty$. 
	 				
	 				Because $f((t_n,x_n)\chi_{y_0}s)\neq 0$, we must have $(t_n,x_n)\chi_{y_0} s \in N_2\cap (\cl_{\IR^+\times X} (N_1\setminus N_2))$
	 				for all $s\in\left[0,f(t_n,x_n)\right[$ and all $n\in\IN$, so $r(t_n, x_n)\pi s\in N\setminus W$ for all $s\in\left[0,f(t_n,x_n)\right]$.
	 				
	 				Since $N$ is strongly admissible, there is a solution $u:\;\IR\to N\setminus W$ of $\pi$. 
	 				However, $u(\IR)\subset K$ because $N$ is an isolating neighborhood for $K$. This is a
	 				contradiction since $K\subset W$.
	 			\end{enumerate}
	 		\end{proof}
	 		
	 		\begin{lemma}
	 			\label{le:140925-1640}
	 			Let $(N_1, N_2)$ be an index pair for $(y_0, K)$. Then
	 			for all $T\in\IR^+$ sufficiently large, $N^{-T}_2 := N^{-T}_2(N_1)$ is 
	 			a neighborhood of $N_2$ in $N_1$.
	 		\end{lemma}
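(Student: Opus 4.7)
The plan is to apply both parts of Lemma \ref{le:140925-1628}. Part (b) provides a constant $T_0 \in \IR^+$ with $f(t,x) \leq T_0$ for every $(t,x) \in N_2$. I claim that any $T > T_0$ works, and I will show $N_2^{-T}$ is a neighborhood of $N_2$ in $N_1$ by producing, for each $(t,x) \in N_2$, an $N_1$-open set $V \ni (t,x)$ with $V \subset N_2^{-T}$.

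The upper semicontinuity from part (a) of Lemma \ref{le:140925-1628}, together with $f(t,x) \leq T_0 < T$, yields an open neighborhood $V$ of $(t,x)$ in $N_1$ on which $f < T$. For any $(t',x') \in V$, the definition of $f$ as a supremum gives some $s_0 < T$ with $(t',x') \chi_{y_0} s_0 \notin \cl(N_1 \setminus N_2)$: indeed, pick any $\eps > 0$ small enough that $f(t',x') + \eps < T$; then $f(t',x') + \eps$ fails the sup condition, so the desired $s_0 \in [0, f(t',x') + \eps]$ exists.

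From here I split into two cases. If $(t',x')\chi_{y_0} s_0 \in N_1$, then it lies in $N_1 \setminus \cl(N_1 \setminus N_2)$, which is contained in $N_2$; hence $(t',x') \in N_2^{-T}$. Otherwise $(t',x')\chi_{y_0} s_0 \notin N_1$, and the basic index pair property (IP2) applied to $(N_1, N_2)$ yields some $s \in [0, s_0]$ with $(t',x')\chi_{y_0} s \in N_2$; since $s \leq s_0 < T$, again $(t',x') \in N_2^{-T}$.

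The proof is essentially immediate once Lemma \ref{le:140925-1628} is in hand, so there is no serious obstacle. The only point that requires attention is the strict-versus-nonstrict inequality: one must choose $T$ strictly larger than $T_0$ so that upper semicontinuity yields $f < T$ (not merely $f \leq T$) on $V$, and then use the sup characterization of $f$ to extract a witness $s_0$ strictly below $T$ rather than just at most $f(t',x')$.
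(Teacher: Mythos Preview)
Your proof is correct and follows essentially the same approach as the paper: both use Lemma~\ref{le:140925-1628} to produce the open set $\{f<T\}\supset N_2$ and then verify $\{f<T\}\subset N_2^{-T}$ via the same two-case analysis using (IP2). The only cosmetic difference is that the paper allows $s_0\leq T+\eps$ and then passes to the limit $\eps\to 0$ using closedness of $N_2$, whereas you take $T>T_0$ strictly and choose $\eps$ with $f(t',x')+\eps<T$ to obtain $s_0<T$ directly, which is arguably cleaner.
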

	 		
	 		\begin{proof}
	 			By Lemma \ref{le:140925-1628} (a), $W^T := f^{-1}(\left[0,T\right[)$ is open for every $T\in\IR^+$. 
	 			If $T$ is sufficiently large, then $W^T\supset N_2$ by Lemma \ref{le:140925-1628} (b), so $W^T$ is a neighborhood of $N_2$ in $N_1$. 
	 			We are going to show that $W^T\subset N^{-T}_2$,
	 			which implies that for large $T\in\IR^+$, $N^{-T}_2$ is a neighborhood of $N_2$ as claimed.
	 			
	 			In order to prove the inclusion $W^T\subset N^{-T}_2$, let $x\in W^T$ and $\eps>0$ be arbitrary. 
	 			We have $x\chi t\not\in \cl (N_1\setminus N_2)$ for some $t\leq T+\eps$ solely by the definition of $f$. 
	 			Either $x\chi_{y_0} t\in N_1$ and thus $x\chi_{y_0} t\in N_2$ or $x\chi t'\in N_2$ for some $t'\leq t$ because
	 			$(N_1, N_2)$ is an index pair. Since
	 			$\eps>0$ is arbitrary and $N_2$ closed, it follows that $x\chi t''\in N_2$ for some $t''\leq T$,
	 			so $x\in N^{-T}_2$.
	 		\end{proof}
	 		
	 		\begin{proof}[of Lemma \ref{le:140925-1601}]
	 			Consider the following sequence of inclusion induced mappings.
	 			\begin{equation*}
	 			\xymatrix{
	 				\Hom_*(N_1/N_3, N_2/N_3) \ar[r]^-i & 
	 				\Hom_*(N_1/N_3, N^{-T}_2/N_3)\\
	 				\ar[r]^-k &\Hom_*(N_1/N_2, N^{-T}_2/N_2) \ar[r]^-l &
	 				\Hom_*(N_1/N_2, N_2/N_2).
	 			}
	 			\end{equation*}
	 			We will show that $i,k,l$ are isomorphisms.
	 			
	 			Firstly, we consider $i$.
	 			Define $\varphi_T:\; N_1/N_3\to N_1/N_3$
	 			by
	 			\begin{equation*}
	 			\varphi_T(\rep{t,x}) := 
	 			\begin{cases}
	 			\rep{(t,x)\chi_{y_0} T} & (t,x)\chi_{y_0}s\in N_1\setminus N_3\text{ for all }s\in\left[0,T\right]\\
	 			N_3 & \text{otherwise}.
	 			\end{cases}
	 			\end{equation*}
	 			It follows from Lemma 3.7 in \cite{article_naci} that
	 			$\varphi_T$ and therefore its restriction 
	 			to $N^{-T}_2/N_3$ are continuous. We conclude that $i^{-1} = \varphi_T$
	 			up to homotopy, so $i$ is indeed an isomorphism.
	 			
	 			Secondly, choosing $T$ sufficiently large, it follows
	 			from Lemma \ref{le:140925-1640} that $N^{-T}_2$ is a neighborhood
	 			of $N_2\supset N_3$. Hence, $k$ is an isomorphism by the
	 			excision property of homology.
	 			
	 			Thirdly, it follows as before that the one-point space
	 			$N_2/N_2$ is a deformation retract of $N^{-T}_2/N_2$. Hence, $k$
	 			must be an isomorphism as well, completing the proof.
	 		\end{proof}
	 		
	 		In view of Lemma \ref{le:140925-1601}, we can now
	 		make define long exact sequences associated with index triples. To keep the definition short,
	 		recall that the homology theory defines a boundary operator (connecting
	 		homomorphism) $\partial(X,A)$ for every topological pair $(X,A)$.
	 		Let $(X,A,B)$ be a triple of topological spaces, where $B\subset A\subset X$
	 		are subspaces. There is a long exact sequence associated with $(X,A,B)$
	 		and its (natural) connecting homomorphism $\delta$ is given by $\delta := \Hom_*(k)\circ \partial(X,A)$,
	 		where $k:\; (A,\emptyset) \to (A,B)$ denotes the inclusion (see \cite[Theorem 5 in Section 4.8]{spanier}).
	 		
	 		\index{long exact sequence associated with an index pair}
	 		\begin{definition}
	 			\label{df:141024-1736}
	 			Let $(N_1, N_2, N_3)$ be an index triple for $(y_0, K, A, R)$. Let
	 			$q:\; \Hom_*(N_1/N_3, N_2/N_3)\to \Hom_*(N_1/N_2, N_2/N_2)$ be inclusion induced
	 			and set $\partial = \delta\circ q^{-1}$, where 
	 			$\delta$ is the connecting homomorphism
	 			associated with the triple $(N_1/N_3, N_2/N_3, N_3/N_3)$.
	 			
	 			The long exact sequence associated with $(N_1, N_2, N_3)$ is
	 			\begin{equation}
	 			\label{eq:140925-1842}
	 			\xymatrix@1{
	 				\ar[r] & \Hom_*\rep{N_2, N_3} \ar[r]^-i & \Hom_*\rep{N_1, N_3} \ar[r]^-p & \Hom_*\rep{N_1,N_2} \ar[r]^-\partial & \Hom_{*-1}\rep{N_2,N_3}
	 				\ar[r]&.
	 			}
	 			\end{equation}
	 		\end{definition}
	 		
	 		Here, we denote $\Hom_*\rep{N_1, N_2} := \Hom_*(N_1/N_2, \{N_2\})$.
	 		
	 		\begin{lemma}
	 			Let $(N_1, N_2, N_3)$ be an index triple for $(y_0, K, A, R)$.
	 			The sequence \eqref{eq:140925-1842} associated with $(N_1, N_2, N_3)$ is 
	 			exact.
	 		\end{lemma}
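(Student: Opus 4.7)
The plan is to identify the sequence \eqref{eq:140925-1842} with the long exact sequence of the triple $(N_1/N_3,\, N_2/N_3,\, N_3/N_3)$ in the homology theory $\Hom_*$, which is exact by the Eilenberg--Steenrod axioms (see \cite[Theorem 5 in Section 4.8]{spanier}).

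The first step is to identify the three groups. By the notational convention $\Hom_*\rep{N_1,N_2} := \Hom_*(N_1/N_2, \{N_2\})$, one has $\Hom_*\rep{N_i, N_3} = \Hom_*(N_i/N_3, N_3/N_3)$ for $i = 1, 2$, so these two groups already appear in the triple sequence. For the third group, Lemma \ref{le:140925-1601} supplies an isomorphism $q: \Hom_*(N_1/N_3, N_2/N_3) \to \Hom_*(N_1/N_2, \{N_2\}) = \Hom_*\rep{N_1, N_2}$ induced by the quotient projection $P: N_1/N_3 \to N_1/N_2$.

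The next step is to match the three maps. The map $i$ is by construction the first map of the triple sequence. The map $p$ in \eqref{eq:140925-1842} is induced by $P$ viewed as a map of pairs $(N_1/N_3, \{N_3\}) \to (N_1/N_2, \{N_2\})$, and therefore factors as $p = q \circ j$, where $j: \Hom_*(N_1/N_3, N_3/N_3) \to \Hom_*(N_1/N_3, N_2/N_3)$ is induced by the inclusion of pairs. Under the identification by $q$, the map $p$ thus corresponds to $j$, the second map of the triple sequence. Finally, by Definition \ref{df:141024-1736}, one has $\partial = \delta \circ q^{-1}$, where $\delta$ is the connecting homomorphism of the triple, so $\partial$ corresponds under $q$ to $\delta$ itself.

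The main obstacle is merely the bookkeeping needed to verify the factorization $p = q \circ j$; this is immediate because both maps are induced by the same map of pairs. Exactness of \eqref{eq:140925-1842} then transfers directly from the long exact sequence of the triple $(N_1/N_3, N_2/N_3, N_3/N_3)$ via the isomorphism $q$, and no further nonautonomous input beyond Lemma \ref{le:140925-1601} is required.
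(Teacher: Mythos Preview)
Your proof is correct and follows essentially the same approach as the paper: both identify \eqref{eq:140925-1842} with the long exact sequence of the triple $(N_1/N_3, N_2/N_3, N_3/N_3)$ via the isomorphism $q$ of Lemma~\ref{le:140925-1601}, and transfer exactness accordingly. You are slightly more explicit about the factorization $p = q \circ j$, which the paper encodes in a commutative diagram, but the argument is the same.
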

	 		
	 		\begin{proof}
	 			We rewrite \eqref{eq:140925-1842} as follows.
	 			\begin{equation*}
	 			\xymatrix{
	 				&&&\Hom_*\rep{N_1,N_2}  \ar[rd]^-{\partial}&\\
	 				\ar[r] & \Hom_*\rep{N_2, N_3} \ar[r]^-i & \Hom_*\rep{N_1, N_3} \ar[ru]^-p \ar[r] 
	 				& \Hom_*(N_1/N_3, N_2/N_3) \ar[u]^-q \ar[r]^-{\delta} & \Hom_{*-1}\rep{N_2,N_3}
	 				\ar[r]&
	 			}
	 			\end{equation*}
	 			The lower row is the long exact sequence of the triple $(N_1/N_3, N_2/N_3, N_3/N_3)$.
	 			The result follows easily because $q$ is an isomorphism by Lemma \ref{le:140925-1601}.
	 		\end{proof}
	 		
	 		\begin{lemma}
	 			\label{le:20141024-1820}
	 			Let $(N_1, N_2, N_3)$ (resp. $(N'_1, N'_2, N'_3)$) 
	 			be an index triple for $(y_0, K, A, R)$ (resp. $(y'_0, K', A', R')$).
	 			The boundary operator $\partial$ is natural with respect
	 			to continuous mappings $f:\; (N_1, N_2, N_3) \to (N'_1, N'_2, N'_3)$, that is,
	 			if $\partial$ and $\partial'$ denote the respective boundary operators, then
	 			\begin{equation*}
	 			\xymatrix{
	 				\Hom_*\rep{N_1, N_2} \ar[r]^-{\partial} \ar[d]^-f & \Hom_*\rep{N_2, N_3} \ar[d]^-f \\
	 				\Hom_*\rep{N'_1, N'_2} \ar[r]^-{\partial'} & \Hom_*\rep{N'_2, N'_3}
	 			}
	 			\end{equation*}
	 			is commutative.
	 		\end{lemma}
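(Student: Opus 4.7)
The plan is to unfold the definition $\partial = \delta \circ q^{-1}$ and reduce the claim to the naturality of the connecting homomorphism of a triple (an Eilenberg--Steenrod axiom) together with the functoriality of $\Hom_*$.

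First I would observe that any continuous map $f:(N_1,N_2,N_3)\to (N'_1,N'_2,N'_3)$ descends to continuous maps between all the relevant quotient pairs, because $f(N_i)\subset N'_i$ for $i=2,3$ implies that $f$ passes to $N_1/N_3\to N'_1/N'_3$, $N_2/N_3\to N'_2/N'_3$ and $N_1/N_2\to N'_1/N'_2$, all sending basepoints to basepoints. I will (abusing notation) still write $f$ for the induced maps on homology.

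Next I would expand the diagram by inserting the auxiliary group $\Hom_*(N_1/N_3,N_2/N_3)$ (resp.\ $\Hom_*(N'_1/N'_3,N'_2/N'_3)$) in the middle:
\begin{equation*}
\xymatrix{
\Hom_*\rep{N_1,N_2} \ar[d]^-f & \ar[l]_-{q} \Hom_*(N_1/N_3, N_2/N_3) \ar[r]^-{\delta} \ar[d]^-f & \Hom_*\rep{N_2, N_3} \ar[d]^-f\\
\Hom_*\rep{N'_1,N'_2} & \ar[l]_-{q'} \Hom_*(N'_1/N'_3, N'_2/N'_3) \ar[r]^-{\delta'} & \Hom_*\rep{N'_2, N'_3}
}
\end{equation*}
The left square commutes because $q$ and $q'$ are induced by projections (of pairs) that manifestly commute with $f$ on the level of spaces, and $\Hom_*$ is a functor. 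The right square commutes by naturality of the connecting homomorphism of a triple, applied to the map of triples $(N_1/N_3, N_2/N_3, N_3/N_3)\to (N'_1/N'_3, N'_2/N'_3, N'_3/N'_3)$ induced by $f$; this is part of the Eilenberg--Steenrod axioms (cf.\ the recipe $\delta = \Hom_*(k)\circ\partial(X,A)$ used just before Definition \ref{df:141024-1736}, each factor of which is natural in maps of pairs).

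Finally, since Lemma \ref{le:140925-1601} guarantees that $q$ and $q'$ are isomorphisms, the commutativity of the left square implies the commutativity of the corresponding square with $q^{-1}$ and $(q')^{-1}$ in place of $q$ and $q'$. Composing the two natural squares then yields $\partial'\circ f = f\circ \partial$ as desired. There is no real obstacle here; the only point to be careful about is that $f$ genuinely defines a map of the auxiliary triples $(N_1/N_3,N_2/N_3,N_3/N_3)\to (N'_1/N'_3,N'_2/N'_3,N'_3/N'_3)$, which is immediate from the inclusions $f(N_i)\subset N'_i$.
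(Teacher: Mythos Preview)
Your proposal is correct and is essentially an explicit unfolding of the paper's own two-sentence proof, which simply notes that both the connecting homomorphism $\delta$ of a triple and the projection-induced map $q$ are natural, whence so is $\partial=\delta\circ q^{-1}$. The added diagram and the remark that $f$ descends to the quotient triples are exactly the details the paper leaves implicit.
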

	 		
	 		\begin{proof}
	 			This follows easily from Definition \ref{df:141024-1736}.
	 			The connecting homomorphisms of the long exact sequences
	 			associated with a triple are natural, and so are the projections $q$.
	 		\end{proof}
	 	\end{subsection}
	 	
	 	\begin{subsection}{The homology Conley index}
	 		The homotopy index of an invariant set is the homotopy type
	 		of an appropriate quotient space. A homology index could be defined
	 		similarly as equivalence class of graded modules or the like.
	 		However, to not loose the connecting homomorphisms introduced in the
	 		previous sections, requires a more sophisticated approach. 
	 		
	 		\begin{definition}
	 			\index{homology Conley index}
	 			Let $y_0\in Y$ and $K\subset\Hull^+(y_0)\times X$ be
	 			an isolated invariant set admitting a strongly admissible
	 			isolating neighborhood, so its categorial Conley index $\Con(y_0,K)$
	 			is defined.
	 			
	 			The (categorial, nonautonomous) homology Conley index is
	 			obtained by applying the homology functor, that is:
	 			\begin{enumerate}
	 				\item If $A$ is an object of $\Con(y_0,K)$, then
	 				$\Hom_*(A)$ is an object of $\Hom_*\Con(y_0,K)$.
	 				\item If $f$ is a morphism of $\Con(y_0, K)$, then
	 				$\Hom_*(f)$ is a morphism of $\Hom_*\Con(y_0, K)$.
	 			\end{enumerate}
	 		\end{definition}
	 		
	 		As a consequence of the above definition, the homology Conley
	 		index is an object of $\CSS(\gradMod)$, where $\gradMod$
	 		denotes the category of graded modules. Suppose we are given
	 		a long sequence
	 		\begin{equation*}
		 		\xymatrix@1{
		 			\ar[r] & \mathcal{A}^n \ar[r]^-{\rep{f^n}} & \mathcal{A}^{n+1}  \ar[r] &
		 		}
		 	\end{equation*}
		 	in $\CSS(\mathrm{\gradMod})$. Choose objects $A^n$ of $\mathcal{A}^n$ for every $n\in\IN$.
		 	We say that the above sequence is exact if
		 	\begin{equation*}
		 		\xymatrix@1{
		 			\ar[r] & A^n \ar[r]^-{f^n} & A^{n+1}  \ar[r] &
		 		}
		 	\end{equation*}
		 	is exact. Note that this notion of exactness is independent of the particular choice of
		 	objects.
	 		
	 		The above definition immediately leads to
	 		the following question: Does a connecting homomorphism $\partial$
	 		which is defined by a particular index triple give rise
	 		to a unique morphism of the homology index? The answer is affirmative
	 		as we will see below.
	 		\index{attractor-repeller sequence}
	 		\index{connecting homomorphism}
	 		
	 		\begin{theorem}
	 			\label{th:150727-1913}
	 			Let $(N_1, N_2, N_3)$ be an index triple for $(y_0, K, A, R)$.
	 			The connecting homomorphism $\partial$ that is given by Definition
	 			\ref{df:141024-1736} gives rise to a unique i.e., independent of $(N_1, N_2, N_3)$,
	 			morphism $\rep{\partial}$
	 			in $\CSS(\gradMod)$ and 
	 			\begin{equation}
	 			\label{eq:160107-1253}
	 			\xymatrix@1{
	 				\ar[r] & \Hom_*\Con(y_0,A) \ar[r]^-{\rep{i}} & \Hom_*\Con(y_0,K) \ar[r]^-{\rep{p}} 
	 				& \Hom_*\Con(y_0,R) \ar[r]^-{\rep{\partial}} & \Hom_{*-1}\Con(y_0,A)
	 				\ar[r]&
	 			}
	 			\end{equation}
	 			is a long exact sequence.
	 		\end{theorem}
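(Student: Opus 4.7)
The plan is to deduce the theorem from the naturality of the boundary operator (Lemma \ref{le:20141024-1820}), combined with a common-refinement construction for index triples that extends Theorem \ref{th:140128-1546} from pairs to triples. Given two index triples $(N_1, N_2, N_3)$ and $(N'_1, N'_2, N'_3)$ for $(y_0, K, A, R)$, I would first produce a third index triple $(L_1, L_2, L_3)$ for the same attractor-repeller decomposition together with a $T > 0$ such that, as triples of sets,
\[
(L_1, L_2, L_3) \subset (N_1, N^{-T}_2, N^{-T}_3) \quad \text{and} \quad (L_1, L_2, L_3) \subset (N'_1, (N'_2)^{-T}, (N'_3)^{-T}).
\]
By Lemma \ref{le:141029-1824}, the enlarged triples on the right are still index triples for $(y_0, K, A, R)$.

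To construct $(L_1, L_2, L_3)$, I apply Theorem \ref{th:140128-1546} to the $K$-pairs $(N_1, N_3)$ and $(N'_1, N'_3)$ to obtain an index pair $(L_1, L_3)$ for $(y_0, K)$ contained in $(N_1, N^{-T}_3) \cap (N'_1, (N'_3)^{-T})$ for $T$ large. To produce the middle term, I separate $A$ and $R$ by disjoint open neighborhoods, using the construction from the proof of Lemma \ref{le:141106-1717}, and choose a closed neighborhood $V$ of $A$ disjoint from $R$ and small enough that $r^{-1}(V) \cap L_1 \subset N^{-T}_2 \cap (N'_2)^{-T}$; then I set $L_2 := L_3 \cup (L_1 \cap r^{-1}(V))$. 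Combining Lemma \ref{le:141106-1717} with the sandwich Lemma \ref{le:150921-1746} applied twice (once for the $A$-pair $(L_2, L_3)$, once for the $R$-pair $(L_1, L_2)$) yields the required index-triple structure and inclusions.

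With the refinement in hand, Lemma \ref{le:20141024-1820} applied to the two triple inclusions yields a pair of commutative squares relating the connecting homomorphism $\partial_L$ to $\partial$ computed from either starting triple. The vertical inclusion-induced maps in these squares, composed with $\rep{N_1, N^{-T}_2} \to \rep{N_1, N_2}$ and $\rep{N^{-T}_2, N^{-T}_3} \to \rep{N_2, N_3}$ (arising from Lemma \ref{le:141029-1824} and Lemma \ref{le:141106-1717}, and being inclusions of index pairs for $(y_0, R)$ resp.\ $(y_0, A)$), are inner morphisms of $\Con(y_0, R)$ and $\Con(y_0, A)$ by Definition \ref{df:catci}. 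Hence the two connecting homomorphisms define the same morphism in $\CSS(\gradMod)$, giving the well-defined $\rep{\partial}$. Exactness of \eqref{eq:160107-1253} follows from exactness of \eqref{eq:140925-1842} for any chosen representative index triple (whose existence is guaranteed by Lemma \ref{le:141205-1638}), since exactness in $\CSS(\gradMod)$ is tested on arbitrary representatives as discussed just before the theorem.

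The main obstacle will be the common-refinement construction: extending Theorem \ref{th:140128-1546} from pairs to triples requires simultaneous control of three nested sets while preserving the attractor-repeller structure, and the gluing step defining $L_2$ needs careful verification that it simultaneously refines the $A$-index-pair data of both starting triples. I expect the separation argument already used in Lemma \ref{le:141106-1717}, the sandwich Lemma \ref{le:150921-1746}, and the $N^{-T}$ construction to suffice, but the bookkeeping ensuring that all five index-pair axioms hold for $(L_2, L_3)$ and $(L_1, L_2)$ simultaneously is the genuinely nontrivial part.
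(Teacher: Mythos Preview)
Your overall architecture is exactly the paper's: reduce well-definedness of $\rep{\partial}$ to a common-refinement lemma for index triples, then invoke Lemma~\ref{le:20141024-1820} and Lemma~\ref{le:141029-1824}. The paper states this refinement as Lemma~\ref{le:141028-1909} and packages the commutativity conclusion as Theorem~\ref{th:150320-1505}.

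The gap is in your construction of $L_2$. Setting $L_2 := L_3 \cup (L_1 \cap r^{-1}(V))$ for a small closed neighborhood $V$ of $A$ does not in general give a basic index pair $(L_2, L_3)$: property (IP2) can fail. A point $x \in L_1 \cap r^{-1}(V)$ may flow out of $r^{-1}(V)$ while remaining in $L_1 \setminus L_3$ the whole time, so $x\chi_{y_0} t \notin L_2$ with no intermediate hit of $L_3$. The sandwich Lemma~\ref{le:150921-1746} does not help here, since it presupposes that the middle pair already satisfies (IP1)--(IP3) and only upgrades it to an index pair \emph{for} $(y_0,A)$ by checking (IP4)--(IP5). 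You would need $V$ to be forward invariant relative to the ambient isolating neighborhood (e.g.\ coming from an isolating block for $A$), which is an additional nontrivial choice you do not make.

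The paper sidesteps this by applying Theorem~\ref{th:140128-1546} a \emph{second} time, to the $A$-pairs $(N_2,N_3)$ and $(N'_2,N'_3)$, obtaining an index pair $(L'_2,L'_3)$ for $(y_0,A)$ that already satisfies (IP1)--(IP3). The triple is then $(L_1,L_2,L_3) := (\tilde L_1,\, L'_2 \cup \tilde L_3^{-T''},\, \tilde L_3^{-T''})$, and the delicate step is not building $L_2$ but enlarging $L_3$ enough (via a compactness/admissibility argument) so that $L'_3 \subset \tilde L_3^{-T''}$. With this choice (IP2) for $(L_2,L_3)$ follows directly from (IP2) for $(L'_2,L'_3)$.
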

	 		
	 		\eqref{eq:160107-1253} is called the {\em (long exact) attractor-repeller sequence} of
	 		$(y_0, K, A, R)$. We also say that $\rep{\partial}$ is the connecting homomorphism of
	 		$(y_0, K, A, R)$ respectively of the attractor-repeller sequence associated with $(y_0, K, A, R)$.
	 		
	 		It is an immediate consequence of Theorem \ref{th:150320-1505} below that $\rep{\partial}$
	 		is well-defined. The proof that the morphisms $\rep{i}$ and $\rep{p}$ are well-defined
	 		is omitted. 
	 		
	 		\begin{lemma}
	 			\label{le:141028-1909}
	 			Let $(N_1, N_2, N_3)$ and $(M_1, M_2, M_3)$ be index triples
	 			for $(y_0, K, A, R)$. Then there is an index triple $(L_1, L_2, L_3)$
	 			such that for some $T>0$
	 			\begin{equation}
	 			\label{eq:141028-1946}
	 			(L_1, L_2, L_3) \subset (N_1\cap M_1, N^{-T}_2(N_1)\cap M^{-T}_2(M_1), N^{-T}_3(M_1)\cap M^{-T}_3(M_1)).
	 			\end{equation}
	 		\end{lemma}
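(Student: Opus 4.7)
The plan is to upgrade the strategy of Theorem~\ref{th:140128-1546} from index pairs to index triples, applying that theorem once to build the outer and innermost sets $L_1, L_3$, and then constructing the intermediate set $L_2$ by hand.

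First, I would apply Theorem~\ref{th:140128-1546} to the two index pairs $(N_1, N_3)$ and $(M_1, M_3)$ for $(y_0, K)$, together with a strongly admissible neighborhood $\tilde N$ of $K$ chosen small enough that $r^{-1}(\tilde N) \subset N_1 \cap M_1$. This produces $T_1 \in \IR^+$ and an index pair $(L_1, L_3)$ for $(y_0, K)$ with
\[
L_1 \subset r^{-1}(\tilde N)\cap N_1\cap M_1, \qquad L_3 \subset N^{-T_1}_3(N_1)\cap M^{-T_1}_3(M_1).
\]
I would then set
\[
L_2 := L_3 \cup \bigl(L_1 \cap N^{-T_1}_2(N_1) \cap M^{-T_1}_2(M_1)\bigr),
\]
which is closed, satisfies $L_3 \subset L_2 \subset L_1$, and for which the inclusions of \eqref{eq:141028-1946} hold with $T := T_1$.

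Second, I would verify that $(L_1, L_2, L_3)$ is an index triple. Since $(L_1, L_3)$ is already an index pair for $(y_0, K)$, only the index pair axioms for $(L_2, L_3)$ relative to $(y_0, A)$ remain. The basic axioms (IP1)--(IP3) follow directly: a trajectory in $L_2$ that leaves $L_2$ either leaves the tube $N^{-T_1}_2(N_1)\cap M^{-T_1}_2(M_1)$ -- in which case it enters $L_3$ by the definition of $N^{-T_1}_2$ (cf.\ the proof of Lemma~\ref{le:140924-1939}) -- or it leaves $L_1$, in which case (IP2) for $(L_1, L_3)$ forces it through $L_3$; (IP3) for $(L_2, L_3)$ is inherited from (IP3) for $(L_1, L_3)$.

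To promote $(L_2, L_3)$ from a basic index pair to one for $(y_0, A)$, I would invoke the sandwich Lemma~\ref{le:150921-1746}. The outer bounding index pair for $(y_0, A)$ is obtained from Lemma~\ref{le:141029-1824}: applied to both $(N_1, N_2, N_3)$ and $(M_1, M_2, M_3)$, it yields that $(N^{-T_1}_2(N_1), N^{-T_1}_3(N_1))$ and $(M^{-T_1}_2(M_1), M^{-T_1}_3(M_1))$ are index pairs for $(y_0, A)$, and after intersecting with $L_1$ and adjoining $L_3$, the resulting pair contains $L_2 \setminus L_3$. The inner bounding index pair is produced by a second application of Theorem~\ref{th:140128-1546} to the index pairs $(N_2, N_3)$ and $(M_2, M_3)$ for $(y_0, A)$, possibly after enlarging $T_1$ to a common $T$. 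The main obstacle will be matching parameters so that this inner pair genuinely sits inside $L_2 \setminus L_3$; this rests on choosing $\tilde N$ small enough that a full neighborhood of $A$ in $\Hull^+(y_0)\times X$ is captured by $L_2$ while remaining disjoint from $L_3$, which in turn follows from the attractor-repeller splitting of $K$ together with the strong admissibility of $\tilde N$.
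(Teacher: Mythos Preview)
Your construction of $L_2$ differs from the paper's. The paper applies Theorem~\ref{th:140128-1546} a \emph{second} time, to the index pairs $(N_2,N_3)$ and $(M_2,M_3)$ for $(y_0,A)$, obtaining a small pair $(L'_2,L'_3)$ with $L'_2\subset \tilde L_1\cap N_2\cap M_2$; it then proves the key inclusion $L'_3\subset \tilde L_3^{-T''}$ via an admissibility argument and sets $(L_1,L_2,L_3):=(\tilde L_1,\,L'_2\cup\tilde L_3^{-T''},\,\tilde L_3^{-T''})$. The payoff is that (IP4) for $(L_2,L_3)$ is immediate, since $L_2\setminus L_3\subset L'_2\setminus L'_3$ and $(L'_2,L'_3)$ is already known to be an index pair for $(y_0,A)$.

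Your definition $L_2:=L_3\cup(L_1\cap N^{-T_1}_2\cap M^{-T_1}_2)$ also yields a valid index triple, but the sandwich argument you propose has a genuine gap on the \emph{outer} side. Lemma~\ref{le:150921-1746} needs an honest index pair $(P'_1,P'_2)$ for $(y_0,A)$ with $L_2\setminus L_3\subset P'_1\setminus P'_2$. Your candidate ``$(N^{-T_1}_2,N^{-T_1}_3)$ after intersecting with $L_1$ and adjoining $L_3$'' is not shown to be an index pair, and the unmodified pair $(N^{-T_1}_2,N^{-T_1}_3)$ fails: since $L_3$ is in general a strict subset of $N^{-T_1}_3(N_1)$, points of $L_1\cap N^{-T_1}_2\cap M^{-T_1}_2$ may lie in $N^{-T_1}_3$ without lying in $L_3$, so $L_2\setminus L_3\not\subset N^{-T_1}_2\setminus N^{-T_1}_3$. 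The fix is to drop the sandwich lemma and verify (IP4) directly, exactly as in the proof of Lemma~\ref{le:141029-1824}: Lemmas~\ref{le:141029-1824} and~\ref{le:141106-1717} give that $(N_1,N^{-T_1}_2)$ is an index pair for $(y_0,R)$, hence there is an open $W_R\supset R$ with $N^{-T_1}_2(N_1)\cap r^{-1}(W_R)=\emptyset$; combined with $L_2\setminus L_3\subset L_1\setminus L_3\subset r^{-1}(N_K)$ this yields $L_2\setminus L_3\subset r^{-1}(N_K\setminus W_R)$, and $N_K\setminus W_R$ is an isolating neighborhood of $A$. Property (IP5) follows directly by intersecting the neighborhoods of $A$ coming from (IP5) for $(L_1,L_3)$, $(N_2,N_3)$ and $(M_2,M_3)$, so no inner sandwich pair is needed either.
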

	 		
	 		\begin{proof}
	 			By Theorem \ref{th:140128-1546}, there are index pairs $(\tilde L_1, \tilde L_3)$ for $(y_0,K)$
	 			and $(L'_2, L'_3)$ for $(y_0, A)$ which have the required inclusion properties, that is,
	 			for some $T'>0$ it holds that
	 			\begin{align*}
	 			(\tilde L_1, \tilde L_3) &\subset (N_1\cap M_1, N^{-T'}_3(N_1)\cap M^{-T'}_3(M_1)) \\
	 			(L'_2, L'_3) &\subset (\tilde L_1\cap N_2\cap M_2, N^{-T'}_3(N_2)\cap M^{-T'}_3(M_2)).
	 			\end{align*}
	 			
	 			Assume for the moment that there is a constant $T''>0$ such that 
	 			\begin{enumerate}
	 				\item[(*)] $(L'_2 \cup \tilde L^{-T''}_3, \tilde L^{-T''}_3)$ is an index pair for $(y_0, A)$. 
	 			\end{enumerate}
	 			By Lemma \ref{le:140924-1939}, $(\tilde L_1, \tilde L^{-T''}_3)$ is an index pair for $(y_0, K)$, so
	 			by (*)
	 			\begin{equation*}
	 			(L_1, L_2, L_3) := (\tilde L_1, L'_2\cup \tilde L^{-T''}_3, \tilde L^{-T''}_3)
	 			\end{equation*}
	 			is an index triple for $(y_0, K, A, R)$. Furthermore, taking $T=T'+T''$, \eqref{eq:141028-1946} 
	 			is satisfied.
	 			
	 			It is therefore sufficient to prove the assumption above. Firstly, we will show
	 			that $L'_3\subset \tilde L^{-T''}_3 := \tilde L^{-T''}_3(\tilde L_1)$ for $T''$
	 			large enough.
 				Suppose that $(t_n, x_n)\in L'_3 \setminus \tilde L^{-2n}_3(N_1)$
	 				is a sequence. We have 
	 				\begin{equation}
	 				\label{eq:160106-1728}
	 				(t_n,x_n)\chi_{y_0} \left[0,2n\right]\subset N^{-T'}_3(N_2) \subset N^{-T'}_3(N_1)
	 				\end{equation}
	 				for all $n\in\IN$.
	 				$(N_1, N^{-T'}_3(N_1))$ is an index pair for $(y_0,K)$ by virtue of Lemma \ref{le:140924-1939},
	 				so there is an admissible isolating neighborhood $N$ of $K$ such that $r(t_n,x_n)\pi\left[0,2n\right]\subset N$
	 				for all $n\in\IN$.
	 				
	 				We may assume without loss of generality that $r(t_n,x_n)\pi n\to (y,x)\in K$, so $(t_n,x_n)\chi_{y_0} n\in N_1\setminus N^{-T'}_3(N_1)$
	 				provided that $n$ is sufficiently large, in contradiction to \eqref{eq:160106-1728}.
	 				
			 	To prove (*), we need to check the assumptions of an index pair.
 				\begin{enumerate}
 					\item[(IP1)] It is clear that $L_2$ and $L_3$ are closed sets with $L_2\subset L_3$.
 					\item[(IP2)] Let $x\in L_2\setminus L_3$ and $x\chi_{y_0} t\not\in L_2$ for some $t>0$. It follows
 					that $x\chi_{y_0} t\not\in L'_2$, so $x\chi_{y_0} s\in L'_3\subset L_3$ for some $s\in\left[0,t\right]$.
 					\item[(IP3)] Suppose that $x\in L_3$, but $x\chi_{y_0} t\not\in L_3$ for $t>0$. 
 					$(L_1, L_3)$ is an index pair by Lemma \ref{le:140924-1939}, so $x\chi_{y_0} s\in (\IR^+\times X)\setminus L_1 \subset (\IR^+\times X)\setminus L_2$ for 
 					some $s\in\left[0,t\right]$.
 					\item[(IP4)] $(L'_2, L'_3)$ is an index pair for $(y_0, A)$. Hence there is an admissible
 					isolating neighborhood $N\subset \Hull^+(y_0)\times X$ for $A$ with $L_2\setminus L_3\subset L'_2\setminus L'_3\subset r^{-1}(N)$. 
	 				\item[(IP5)] There is a neighborhood $W$ of $A$ in $\Hull^+(y_0)\times X$ such that $r^{-1}(W)\subset L'_2\setminus L'_3$.
					Since $(L_1, L_3)$ is an index pair
	 				for $(y_0, K)$, there is also a neighborhood $W_K$ of $K$ with $r^{-1}(W_K)\subset L_1\setminus L_3$. 
	 				The intersection $W_0 := W\cap W_K$ is a neighborhood of $A$, and $r^{-1}(W_0)\subset L_2\setminus L_3$.
 				\end{enumerate}
	 		\end{proof}
	 		
	 		\begin{theorem}
	 			\label{th:150320-1505}
	 			Let $(N_1, N_2, N_3)$ and $(M_1, M_2, M_3)$ be index triples for $(y_0, K, A, R)$. 
	 			Then the following diagram is commutative. 
	 			\begin{equation*}
	 			\xymatrix{&
	 				\ar[r] &\Hom_*\rep{N_1, N_2} \ar[r]^-\partial \ar[d]&
	 				\Hom_*\rep{N_2, N_3} \ar[r] \ar[d]&\\
	 				&
	 				\ar[r] &\Hom_*\rep{M_1, M_2} \ar[r]^-\partial &
	 				\Hom_*\rep{M_2, M_3}\ar[r]&\\
	 			}
	 			\end{equation*}
	 			
	 			Its rows represent the long exact sequences
	 			associated with the respective index triple, and the vertical arrows denote the
	 			respective inner morphism of the categorial Conley index.
	 		\end{theorem}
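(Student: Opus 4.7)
The plan is to reduce commutativity of the diagram to naturality of the connecting homomorphism (Lemma \ref{le:20141024-1820}), via a common ``refinement'' index triple that sits below both $(N_1, N_2, N_3)$ and $(M_1, M_2, M_3)$ in the sense of Lemma \ref{le:141028-1909}.

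First I would apply Lemma \ref{le:141028-1909} to obtain an index triple $(L_1, L_2, L_3)$ for $(y_0, K, A, R)$ together with a constant $T>0$ such that
\begin{equation*}
(L_1, L_2, L_3) \subset \bigl(N_1\cap M_1,\; N^{-T}_2(N_1)\cap M^{-T}_2(M_1),\; N^{-T}_3(N_1)\cap M^{-T}_3(M_1)\bigr).
\end{equation*}
By Lemma \ref{le:141029-1824}, both $(N_1, N^{-T}_2, N^{-T}_3)$ and $(M_1, M^{-T}_2, M^{-T}_3)$ are again index triples for $(y_0, K, A, R)$. One then obtains inclusions of index triples $(L_1, L_2, L_3) \hookrightarrow (N_1, N^{-T}_2, N^{-T}_3)$, $(L_1, L_2, L_3) \hookrightarrow (M_1, M^{-T}_2, M^{-T}_3)$, $(N_1, N_2, N_3) \hookrightarrow (N_1, N^{-T}_2, N^{-T}_3)$ and $(M_1, M_2, M_3) \hookrightarrow (M_1, M^{-T}_2, M^{-T}_3)$.

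Next I would assemble a ladder of the five long exact sequences corresponding to these index triples, the vertical arrows being induced by the four inclusions above. Each square of the ladder commutes by Lemma \ref{le:20141024-1820}, so the connecting homomorphism of $(L_1, L_2, L_3)$ is simultaneously compatible with those of $(N_1, N_2, N_3)$ and $(M_1, M_2, M_3)$. The final step is to identify the vertical composition, at the levels $\Hom_*\rep{N_1, N_2}\to \Hom_*\rep{M_1, M_2}$ and $\Hom_*\rep{N_2, N_3}\to \Hom_*\rep{M_2, M_3}$, with the inner morphism of the categorial index. By the proof of Theorem \ref{th:131121-1208}, this inner morphism is precisely the composition
\begin{equation*}
\rep{N_1,N_2}\to \rep{N_1, N^{-T}_2}\leftarrow \rep{L_1, L_2}\to \rep{M_1, M^{-T}_2}\leftarrow \rep{M_1, M_2},
\end{equation*}
and analogously for the $(\cdot_2,\cdot_3)$-row, where the backward arrows become isomorphisms after Lemma \ref{le:140923-2008}. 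Applying $\Hom_*$ and chasing around the ladder yields the claim.

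The main obstacle is bookkeeping rather than conceptual: one must make sure that the same constant $T$ and the same refinement triple $(L_1, L_2, L_3)$ can be used simultaneously to realise the inner morphism on both levels, and that the connecting homomorphism $\partial$ of Definition \ref{df:141024-1736} really is induced by the continuous map of index triples required by Lemma \ref{le:20141024-1820} (rather than merely by an abstract boundary $\delta$). Both points follow by unwinding the definitions, but they deserve care so as not to conflate morphisms in the categorial index with the raw inclusion-induced maps.
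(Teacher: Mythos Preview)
Your proposal is correct and follows essentially the same route as the paper: first observe that when one index triple is contained in the other the claim is just Lemma~\ref{le:20141024-1820}, then reduce the general case to this via the refinement triple $(L_1,L_2,L_3)$ from Lemma~\ref{le:141028-1909} together with the intermediate triples $(N_1,N^{-T}_2,N^{-T}_3)$ and $(M_1,M^{-T}_2,M^{-T}_3)$ supplied by Lemma~\ref{le:141029-1824}. Your explicit identification of the resulting zig-zag with the inner morphism (via the construction in the proof of Theorem~\ref{th:131121-1208}) is exactly the point the paper leaves implicit when it says the general case ``reduces'' to the inclusion case.
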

	 		
	 		\begin{proof}
	 			Assuming that $(N_1, N_2, N_3)\subset (M_1, M_2, M_3)$, the inner morphisms
	 			are inclusion induced, so the theorem is merely a reformulation of Lemma \ref{le:20141024-1820}.
	 			The general case follows from Lemma \ref{le:141028-1909}. Let the index triple $(L_1, L_2, L_3)$
	 			be given by that lemma. We have
	 			\begin{align*}
	 			(N_1, N_2, N_3) \subset (N_1, N^{-T}_2, N^{-T}_3) \supset (L_1, L_2, L_3)\\
	 			(M_1, M_2, M_3) \subset (M_1, M^{-T}_2, M^{-T}_3) \supset (L_1, L_2, L_3)
	 			\end{align*}
	 			for some $T>0$.
	 			
	 			By Lemma \ref{le:141029-1824}, the triples $(N_1, N^{-T}_2, N^{-T}_3)$
	 			and $(M_1, M^{-T}_2, M^{-T}_3)$ in the middle are index triples. This reduces the general
	 			case to the special case covered by Lemma \ref{le:20141024-1820}.
	 		\end{proof}
	 		
	 	\end{subsection}
	 \end{section}


\providecommand{\bysame}{\leavevmode\hbox to3em{\hrulefill}\thinspace}
\providecommand{\MR}{\relax\ifhmode\unskip\space\fi MR }
\providecommand{\MRhref}[2]{%
	\href{http://www.ams.org/mathscinet-getitem?mr=#1}{#2}
}
\providecommand{\href}[2]{#2}

\end{document}